\newcommand{\R}{\mathbb{R}}
\newcommand{\re}{\mathrm{Re}}
\newtheorem{theorem}{Theorem}[section]
\newtheorem{proposition}{Proposition}[section]
\newtheorem{lemma}{Lemma}[section]
\numberwithin{equation}{section}
\title[Asymptotic Integration of a Linear Fourth Order ODE]
{Asymptotic Integration of a Linear Fourth Order Differential Equation
of Poincar\'e Type}
\author[Coronel]{An{\'\i}bal\ Coronel$^\dag$}
\author[Huancas]{Fernando Huancas$^\dag$}
\author[Pinto]{Manuel Pinto$^\ddag$}
\thanks{$^\dag$ GMA, Departamento de Ciencias B\'asicas,
Facultad de Ciencias, Universidad del B\'{\i}o-B\'{\i}o,
Campus Fernando May, Chill\'{a}n, Chile,
E-mail: {\tt acoronel@ubiobio.cl, fihuanca@gmail.com}}
\thanks{$^\ddag$
	      Departamento de Matem\'atica,
	      Facultad de Ciencias, Universidad de Chile,
	      Santiago, Chile,
	      E-mail: {pintoj@uchile.cl}}
\date{\today}
\begin{document}

\begin{abstract}
This article deals with the asymptotic behavior of fourth order 
differential equation where the coefficients are perturbations 
of linear constant coefficient equation. We introduce a change 
of variable and deduce that the new variable
satisfies a third order differential equation of 
Riccati type. We assume three hypothesis. The first
is the following: all roots of the characteristic 
polynomial associated to the fourth order linear equation has distinct
real part. The other two hypothesis 
are related with the behavior of 
the perturbation functions.  Under this general 
hypothesis we obtain four main results. The first two results
are related with the application 
of fixed point theorem to prove that the Riccati equation has 
a unique solution. The next result
concerns with the 
asymptotic behavior
of the solutions of the Riccati equation. 
The fourth main theorem is introduced to establish the existence 
of a fundamental system of solutions and to precise 
formulas for the asymptotic behavior of 
the linear fourth order  differential equation.
\end{abstract}

\keywords{poincaré-Perron problem, asymptotic behavior, riccati equations}

\maketitle

\section{Introduction}

In this paper we are interested in the following fourth order differential equation
\begin{eqnarray}
y^{(4)}+\displaystyle\sum_{i=0}^3[a_i+r_i (t)] y^{(i)}=0,
\quad
a_i\in \R 
\quad
\mbox{and}
\quad r_i: \R\to \R.
\label{eq:intro_uno}
\end{eqnarray}
This equation is a perturbation of the following constant coefficient equation:
\begin{eqnarray}
y^{(4)}+\displaystyle\sum_{i=0}^3a_i y^{(i)}=0.
 \label{eq:intro_dos}
\end{eqnarray}
The classical analysis of \eqref{eq:intro_uno} is mainly focus on two questions:
the existence of a fundamental system of solutions and the characterization
of the asymptotic behavior of its solutions. The first significative answers of both 
problems  
comes back to the seminal work of Poincar\'e \cite{poincare_1885} and 
has been investigated  by several authors with
long and rich history of results \cite{bellman_book,eastham_book,harris_lutz_1977,hartman_1948}.
However, although it is an old problem    
is a matter which does not lose its topicality and  importance 
in the research community. For instance in the case
of asymptotic behavior  there are the following newer results  
\cite{figueroa_2008,figueroa_2010,barbara_2013,stepin_2005,stepin_2010}.
In particular, in this contribution, we address the question 
of new explicit formulas for asymptotic behavior of nonoscilatory 
solutions for \eqref{eq:intro_uno}  by application of the scalar method 
introduced by Bellman  in \cite{bellman_1949} (see also
\cite{bellman_1950,bellman_book}) and recently 
applied by Figueroa and Pinto \cite{figueroa_2008,figueroa_2010},
Stepin \cite{stepin_2005,stepin_2010} and 
Pietruczuk \cite{barbara_2013}.

Linear fourth-order differential equations appear in several 
areas of sciences and engineering as the more basic mathematical models. 
These simplified equations,  arise from different 
linearization approaches used to give an ideal description
of the physical phenomenon or in order to
analyze (analytically solve or numerically simulate) the 
corresponding  nonlinear governing equations. For instance,
the one-dimensional of Euler-Bernoulli model
in linear theory of elasticity  \cite{aftabizadeh_1986,timoshenko_book},
the optimization of quadratic functionals in optimization
theory \cite{aftabizadeh_1986},
the mathematical model in viscoelastic flows \cite{davies_1988,lamnii_2014},
and the biharmonic equations
in radial coordinates in harmonic analysis \cite{gazzola_2006,karageorgis_2011}.
In particular, here we describe the last application.
We recall that the biharmonic equation
\begin{eqnarray*}
 \Delta^2u(\mathbf{x})=0
 \quad\mbox{in }
 \quad\mathbb{R}^n,
 \quad\mbox{with }
 n\ge 5, 
\end{eqnarray*}
in radial coordinates with $r=\|\mathbf{x}\|$ and $\phi(r)=u(\mathbf{x})$,
may be rewritten as follows
\begin{eqnarray*}
 \phi^{(4)}(r)
 +\frac{2(n-1)}{r}\phi^{(3)}(r)
 +\frac{(n-1)(n-3)}{r^2}\phi^{(2)}(r)
 -\frac{(n-1)(n-3)}{r^3}\phi^{(1)}(r)
 =0,
 \quad
 r\in ]0,\infty[.
\end{eqnarray*}
Now, by introducing the  change of variable $v(t)=e^{-4t/(p-1)}\phi(e^{t})$
for some $p>(n+4)(n-4)^{-1}$,
the differential equation for $\phi$ can be transformed in the following 
equivalent equation
\begin{eqnarray}
 v^{(4)}(t)
 +K_3v^{(3)}(t)
 +K_2v^{(2)}(t)
 +K_1v^{(1)}(t)
 +K_0v(t)
 =0,
 \quad
 t\in  \mathbb{R},
 \label{eq:biarm_radial}
\end{eqnarray}
where 
\begin{eqnarray*}
K_0&=&
\frac{8}{(p-1)^4}\Bigg[
(n-2)(n-4)(p-1)^3
+2(n^2-10n+20)(p-1)^2
-16(n-4)(p-1)+32
\Bigg],
\\
K_1&=&-
\frac{8}{(p-1)^3}\Bigg[
(n-2)(n-4)(p-1)^3
+4(n^2-10n+20)(p-1)^2
-48(n-4)(p-1)+128
\Bigg],
\\
\\
K_2&=&
\frac{1}{(p-1)^2}\Bigg[
(n^2-10n+20)(p-1)^2
-24(n-4)(p-1)+96\Bigg],
\\
K_3&=&
\frac{2}{p-1}\Bigg[
(n-4)(p-1)-8\Bigg],
\end{eqnarray*}
see \cite{gazzola_2006} for further details.
We note that the roots of the characteristic polynomial
associated to the homogeneous equation
are given by
\begin{eqnarray}
 \lambda_1=2\frac{p+1}{p-1}>\lambda_2=\frac{4}{p-1}
 >0>
 \lambda_3=\frac{4p}{p-1}-n>\lambda_4=2\frac{p+1}{p-1}-n.
 \label{eq:insp_h1}
\end{eqnarray}
Thus,  the radial solutions of the biharmonic equation equation in
an space of dimension $n\ge 5$ and with $p>(n+4)(n-4)^{-1}$ can
be analysed by the linear fourth order differential equation
\eqref{eq:biarm_radial} where the  characteristic 
roots satisfy  \eqref{eq:insp_h1} which will be generalized by 
considering throughout of the paper the  assumption (H$_1$).
See the list of assumptions given below at the end of the
introduction.

Nowadays, there exist three big approaches to study the
problem of asymptotic behavior of solutions for
\eqref{eq:intro_uno}: the analytic theory, the nonanalytic 
theory and the scalar method. In a broad sense, we recall 
that the  essence of the analytic theory consist in
the assumption of some  representation of the
coefficients and of the solution,
for instance power series representation (see \cite{codilevi_book} for details). 
Concerning to the nonanalytic theory,
we know that the methods are procedures consisting of the two main 
steps: first a change 
of variable to transform \eqref{eq:intro_uno} in a system 
of first order of Poincare type and then by 
the application of a diagonalization process
to obtain the asymptotic formulas 
(for further details consult 
\cite{eastham_book,coppel_book,fedoryuk_book,olver_book}). 
Meanwhile, in the scalar method  \cite{bellman_book,
figueroa_2008,figueroa_2010,stepin_2005,stepin_2010,barbara_2013,bellman_1949,bellman_1950}
the asymptotic behavior of 
solutions for \eqref{eq:intro_uno} is  obtained by
a change of variable which reduce  \eqref{eq:intro_uno}
to a third order Riccati-type equation. Then, the results for  \eqref{eq:intro_uno}
are derived by analyzing 
the asymptotic behavior of the Riccati equation. 
For instance in \cite{bellman_1950}, Bellman present
the analysis of the second 
order differential equation $u^{(2)}-(1+f(t))u=0$ by
introducing the new variable $v=u^{(1)}/u$ which transform the  linear perturbed equation
in the following Riccati equation $v^{(1)}+v-(1+f(t))=0$.
Then, by assuming several conditions on the regularity and integrability  of $f$,
he obtains the formulas for characterization of
the asymptotic behavior of $u$. For example in the case that
$f(t)\to 0$ when $t\to\infty$, Bellman proves
that there exists two linearly independent 
solutions $u_1$ and $u_2$, such that
$(u^{(1)}_i/u_i)(t)\to (-1)^{i+1}$ when $t\to\infty$ and
\begin{eqnarray*}
 \exp\Big((-1)^{i+1}t-\int_{t_0}^t|f(\tau)|d\tau\Big)\le u_i(t)
 \le \exp\Big((-1)^{i+1}t+\int_{t_0}^t|f(\tau)|d\tau\Big)
 \quad\mbox{for $i=1,2$.}
\end{eqnarray*}
More details and a summarization of the 
results of the application of the scalar method to a special second order equation
are given in \cite{bellman_book}. 

Let us recall some classical results. The list of the results is  non-exhaustive.
Firstly, we recall that Poincar\'e, in \cite{poincare_1885} assumes two hypothesis:
\begin{enumerate}
 \item[(P$_1$)] $\lambda$ is a simple characteristic root of \eqref{eq:intro_dos} 
 distinct of the
real part of the any other characteristic root
 \item[(P$_2$)] the perturbation
functions $r_j$ are rational functions such that, for all $j=0,\ldots,3$,
$r_j(t)\to 0$ when $t\to\infty$.
\end{enumerate}
Then, under (P$_1$)-(P$_2$), he deduce that
$y(t)$, the solution  of \eqref{eq:intro_uno}, has the following asymptotic
behavior: $y^{(\ell)}(t)/y(t)\to (\lambda)^{\ell}$
for $\ell=1,2,3,4$ when $t\to\infty$.
Afterwards, Perron \cite{perron_1909} extends the results of Poincar\'e by 
assuming (P$_1$) and considering instead of (P$_2$) the hypothesis that
the perturbation functions $r_j$ are continuous functions such that, for all $j=0,\ldots,3$,
$r_j(t)\to 0$ when $t\to\infty$. Perhaps, other three important landmarks on the asymptotic
behavior are the contributions of Levinson \cite{levinson1948}, 
Hartman-Wintner \cite{hartman_wintner_1955} and Harrris and Lutz
\cite{harris_lutz_1974,harris_lutz_1977}.
In \cite{levinson1948}, 
Levinson analyze the non-autonomous system $\mathbf{x}'(t)=[\Lambda(t)+R(t)]\mathbf{x}(t)$
where $\Lambda$ is a diagonal matrix and $R$ is the perturbation matrix.
Levinson assumes that the diagonal matrix satisfies a dichotomy condition
and the perturbation function is continuous and belongs to $L^1([t_0,\infty[)$
and prove that a fundamental matrix $X$ has the following
asymptotic representation $X(t)=[I+o(1)]\exp\Big(\int_{t_0}^t\Lambda(s)ds\Big)$.
Meanwhile, Hartman-Wintner assumes that 
the diagonal matrix satisfies a more strong condition than
the Levinson dichotomy condition
and the perturbation function is continuous and belongs to $L^p([t_0,\infty[)$
for some $p\in ]1,2]$
and prove that 
$X(t)=[I+o(1)]\exp\Big(\int_{t_0}^t(\Lambda(s)+\mathrm{diag}(R(s)))ds\Big)$.
In the seventies Harrris and Lutz in 
\cite{harris_lutz_1974} 
(see also \cite{harris_lutz_1977,figueroa_2006,pfeiffer_1972,pfeiffer_1970,pinto_2003})
find a change of variable to unify the results of Levinson and Hartman-Wintner.
Other, important contributions are given for instance by 
\cite{simsa_1988,uri_harry}. We comment that the application of 
Levinson and Hartman-Wintner results to \eqref{eq:intro_uno} are not direct and
should be done via the nonanalytic theory. Here, the main practical 
disadvantage is that,
in most of the cases  $\Lambda$ and $R$ are difficult to
algebraic manipulation and the asymptotic formulas are only theoretical ones.

In this paper, we 
reorganize and reformulate 
the original scalar method of Bellman and then
introduce new hypothesis in order to characterize the asymptotic behavior
of the solutions for \eqref{eq:intro_uno}
by considering
that the perturbation functions satisfy some restrictions
in the $L^p$ sese.
Indeed, the  scalar method is presented in three big steps
(see section~\ref{sec:mainres}).
First, we introduce a change of variable and deduce that
the new variable is a solution of a Riccati-type equation.
In a second step, in order to deduce the well posedness and the
asymptotic behavior of the solution for the Riccati-type equation,
we assume a general hypothesis about 
the linear part of \eqref{eq:intro_uno} and the perturbation
functions.
Then, in a third step, we translate the results 
for the solution of the Riccati-type equation to the solution of
\eqref{eq:intro_uno}. In this step,
we deduce the existence of a fundamental system of
solutions for \eqref{eq:intro_uno} and conclude the process with
the formulation and proof of the asymptotic integration 
formulas for the solutions of \eqref{eq:intro_uno}.

The main results of the paper are summarized
in section~\ref{sec:mainres}.
These  results are obtained by considering the following hypothesis
about the coefficients and perturbation functions
of \eqref{eq:intro_uno}
\begin{enumerate}
 \item[(H$_1$)] All roots of the characteristic polynomial 
 $\lambda^{4}+\sum_{i=0}^3a_i \lambda^{i}$, associated to 
 \eqref{eq:intro_dos}, has distinct real part or equivalently 
  $\Big\{\lambda_i, i=\overline{1,4}\;:\;
 \lambda_1>\lambda_2>\lambda_3>\lambda_4\Big\}\subset\R$
 is  the set of characteristic  roots for  \eqref{eq:intro_dos}.

 \item[(H$_2$)] For all $j=0,\ldots,3,$ the perturbation functions $r_j$
 are selected such that 
 $\mathcal{L}(r_j)(t)\to 0$ when $t\to\infty$, 
where $\mathcal{L}$ is the functional on $L^p([t_0,\infty[)$  defined as follows
\begin{eqnarray}
  \mathcal{L}(E)(t) 
  &=& \int_{t_0}^{\infty}\left[|g(t,s)|
	+\left|\frac{\partial g}{\partial t}(t,s)\right|
	+\left|\frac{\partial^2 g}{\partial t^2}(t,s)\right|
	\right]|E(s)|ds.
\label{eq:functyonal_L}
\end{eqnarray}
Here $L^p([t_0,\infty[)$ is the space of measurable functions 
on $[t_0,\infty[$ for some $t_0>0,$ such that
are $p$-integrable in the sense of Lebesgue
for $p\in [1,\infty[$ or essentially bounded 
for $p=\infty$. 
 
 \item[(H$_3$)] For all $j=0,\ldots,3,$
 the perturbation functions $r_j$
 are belong to
 $\mathcal{F}_{\rho_i}([t_0,\infty[)$ defined by
\begin{eqnarray*}
\mathcal{F}_{\rho_i}([t_0,\infty[)
	&=&\Bigg\{E:[t_0,\infty[\to\R
	\; :\;
	\mathbb{F}_{i}(E)(t)\le \rho_i,
	\quad
	t\ge t_0
	\;\;
	\Bigg\},
\end{eqnarray*}
for each $i=1,\ldots,4,$ with
$\rho_i\in[\mathbb{F}_i(1)(t),\infty[\subset\mathbb{R}^+$ a given (fix) number and
the operators $\mathbb{F}_{i}$ defined as follows
\begin{eqnarray*}
\mathbb{F}_1(E)(t)
	&=&
	\int_{t}^{\infty}e^{-(\lambda_2-\lambda_1)(t-s)}|E(s)|ds,
\\
\mathbb{F}_2(E)(t)
	&=&
	\int_{t_0}^{t}e^{-(\lambda_1-\lambda_2)(t-s)}|E(s)|ds+
	\int_{t}^{\infty}e^{-(\lambda_3-\lambda_2)(t-s)}|E(s)|ds,
\\
\mathbb{F}_3(E)(t)
	&=&
	\int_{t_0}^{t}e^{-(\lambda_2-\lambda_3)(t-s)}|E(s)|ds+
	\int_{t}^{\infty}e^{-(\lambda_4-\lambda_3)(t-s)}|E(s)|ds,
\\
\mathbb{F}_4(E)(t)
	&=&
	\int_{t_0}^{t}e^{-(\lambda_3-\lambda_4)(t-s)}|E(s)|ds.
\end{eqnarray*}

\end{enumerate}
We should be comment that (H$_1$)-(H$_2$)
are used to prove the existence of a fundamental system of solutions for
\eqref{eq:intro_uno} and  (H$_3$) is needed in order to get 
the asymptotic behavior formulas for solutions of \eqref{eq:intro_uno}.
The hypothesis (H$_2$) is new and is the natural generalization of the 
classical hypothesis introduced by Poincar\'e when the perturbation functions
are integrable functions instead of
rational functions. We note that a similar hypothesis to (H$_2$) was introduced
by Figueroa and Pinto~\cite{figueroa_2008}.

The paper is organized as follows. In section~\ref{sec:mainres}
we present the reformulated scalar method and the main results of this paper.
Then, in section~\ref{sec:proofs}
we present the proofs of Theorems~\ref{teo:solution_riccati_type_general},
\ref{teo:solution_asymptotic} and \ref{teo1.7.1}.

\section{Revisited Bellman method and main results}
\label{sec:mainres}

In this section we present the scalar method as a process of three steps.
In each step we present the main results which proofs are deferred to
section~\ref{sec:proofs}.

\subsection{Step 1: Change of variable and reduction of the order.}
We introduce a little bit different change of variable
to those  originally proposed  by Bellman. Here, in this paper, the new variable $z$
is of the following type 
\begin{eqnarray}
z(t)=\frac{y^{(1)}(t)}{y(t)}-\mu 
\quad\mbox{or equivalently}\quad
y(t)=\exp\Big(\int_{t_0}^t (z(s)+\mu )ds\Big),
\label{eq:general_change_var}
\end{eqnarray}
where $y$ is a solution of \eqref{eq:intro_uno} and
$\mu$ is an arbitrary root of the characteristic polynomial associated 
to \eqref{eq:intro_dos}. 
Then, by differentiation of $y(t)$ and
by replacing the results of $y^{\ell}(t)$, $\ell=0,\ldots,4$, 
in \eqref{eq:intro_uno}, we deduce that $z$ is a solution of 
the following third order Riccati-type
\begin{eqnarray}
&&z^{(3)}+[4\mu  +a_3]z^{(2)}
    +[6\mu ^2+3a_3\mu  +a_2]z^{(1)}
    +[4\mu  ^3+3\mu  ^2a_3+2\mu  a_2
    +a_1]z 
    \nonumber\\
   & & \qquad
   +r_3z^{(2)}+[3\mu  r_3(t)+r_2(t)]z^{(1)}
   +[3\mu  ^2r_3(t)+2\mu  r_2(t)+r_1(t)]z
   +\mu  ^3r_3(t) 
   \nonumber\\
   & & \qquad
   +\mu  ^2r_2(t)+\mu  r(t)+r_0(t)
   +4zz^{(2)}+[\mu    +3a_3+3r_3(t)]zz^{(1)}
   +6z^2z^{(1)} 
   \nonumber\\
   & &\qquad
   +3[z^{(1)}]^2+[6\mu  ^2+3\mu  a_3+a_2
   +3\mu  r_3(t)+r_2(t)]z^2+[4\mu  +r_3(t)]z^3+z^4
   =0.
   \label{eq:ricati_original}
\end{eqnarray}
Now, if we define the operators $\Uppsi^l$ and $\Uppsi^n$
by the following relations
\begin{eqnarray}
\Uppsi^l(\mu,h)&=&h^{(3)}+[4\mu  +a_3]h^{(2)}+[6\mu^2+3a_3\mu 
    +a_2]h^{(1)}
   \nonumber\\   
    & & +[4\mu  ^3+3\mu  ^2a_3+2\mu  a_2+a_1]h,
 \label{eq:riccati_type_lineal}   
\\
\Uppsi^n(\mu,h)  &= & r_3h^{(2)}+[3\mu  r_3+r_2]h^{(1)}+[3\mu 
   ^2r_3+2\mu  r_2+r_1]h+\mu  ^3r_3 
   \nonumber\\
   & & +\mu  ^2r_2+\mu  r_i+r_0+4hh^{(2)}
   +[\lambda\mu  +3a_3+3r_3]hh^{(1)}+6h^2h^{(1)} 
   \nonumber\\
   & &
   +3[h^{(1)}]^2+[6\mu  ^2+3\mu  a_3+a_2
   +3\mu  r_3+r_2]h^2+[4\mu  +r_3]h^3+h^4,
   \label{eq:riccati_type_nonlineal}
\end{eqnarray}
we note that \eqref{eq:ricati_original} can be 
equivalently rewritten as follows
\begin{eqnarray}
\Uppsi^l (\mu,z)+\Uppsi^n (\mu,z)=0.
\label{eq:riccati_type}
\end{eqnarray}
Note that $\Uppsi^l$ and  $\Uppsi^n$ are linear and
nonlinear operators, respectively.
Thus, the analysis of original linear perturbed
equation of fourth order 
\eqref{eq:intro_uno} is translated to the analysis of a nonlinear
third order equation  \eqref{eq:riccati_type}.
Moreover, 
we note that characteristic polynomials associated to \eqref{eq:intro_dos} 
and to $\Uppsi^l(\lambda,z)=0$ are related in the sense of the following Proposition.

\begin{proposition}
\label{lem:solucion_partelineal}
Let us consider  $\Uppsi^l$ the operator defined in \eqref{eq:riccati_type_lineal}.
If  $\lambda_i$   and $\lambda_j$ are  two distinct  
characteristic polynomials associated to \eqref{eq:intro_dos},
then $\lambda_j-\lambda_i$ is a 
root of the characteristic polynomial associated to 
the differential equation $\Uppsi^l (\lambda_i,z)=0$. 
\end{proposition}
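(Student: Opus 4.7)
Writing $P(\lambda)=\lambda^{4}+a_3\lambda^{3}+a_2\lambda^{2}+a_1\lambda+a_0$ for the characteristic polynomial of \eqref{eq:intro_dos} and reading off from \eqref{eq:riccati_type_lineal} the characteristic polynomial of $\Uppsi^{l}(\mu,z)=0$, namely
$$Q_\mu(\nu)=\nu^{3}+(4\mu+a_3)\nu^{2}+(6\mu^{2}+3a_3\mu+a_2)\nu+(4\mu^{3}+3a_3\mu^{2}+2a_2\mu+a_1),$$
I notice that the coefficients of $Q_\mu$ look exactly like the Taylor coefficients of $P$ at $\mu$. The whole argument will hinge on verifying the algebraic identity
$$P(\mu+\nu)=P(\mu)+\nu\,Q_\mu(\nu).$$

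The first step is to check this identity by expanding $(\mu+\nu)^{k}$ for $k=1,2,3,4$ in $P(\mu+\nu)$ via the binomial theorem and grouping powers of $\nu$. The coefficient of $\nu^{j}$ so produced equals $P^{(j)}(\mu)/j!$; for $j=0$ this is $P(\mu)$, while for $j=1,2,3,4$ it coincides with the coefficient of $\nu^{j-1}$ in $Q_\mu(\nu)$. This is a routine calculation, and I do not expect any real obstacle here: the proof is essentially bookkeeping.

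With the identity in hand, the conclusion is immediate. Setting $\mu=\lambda_i$ and $\nu=\lambda_j-\lambda_i$ gives $\mu+\nu=\lambda_j$, and by hypothesis (H$_1$) both $P(\lambda_i)$ and $P(\lambda_j)$ vanish, so the identity yields $\nu\,Q_{\lambda_i}(\nu)=0$. Since $\lambda_i\neq\lambda_j$ forces $\nu\neq 0$, we conclude $Q_{\lambda_i}(\lambda_j-\lambda_i)=0$, which is exactly the claim. A conceptual reading of the same identity is that the substitution $y=e^{\mu t}u$ transforms \eqref{eq:intro_dos} into $P(D+\mu)u=0$ through the shift rule $D(e^{\mu t}u)=e^{\mu t}(D+\mu)u$, with $D=d/dt$; factoring the trivial root $\nu=0$ (which appears because $P(\mu)=0$) out of $P(\nu+\mu)$ recovers $Q_\mu(\nu)$ as the reduced characteristic polynomial, and its roots are then $\{\lambda_j-\mu : \lambda_j\neq\mu\}$.
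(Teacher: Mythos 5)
Your proof is correct and follows essentially the same route as the paper: both establish the identity $P(\lambda_j)-P(\lambda_i)=(\lambda_j-\lambda_i)\,Q_{\lambda_i}(\lambda_j-\lambda_i)$ and conclude from $P(\lambda_i)=P(\lambda_j)=0$ and $\lambda_i\neq\lambda_j$. The only difference is cosmetic --- you verify the identity via the binomial/Taylor expansion of $P(\mu+\nu)$, while the paper factors $\lambda_j^{k}-\lambda_i^{k}$ and regroups the resulting symmetric sums in powers of $\lambda_j-\lambda_i$.
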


\begin{proof}
Considering $\lambda_i\not = \lambda_j$ satisfying the characteristic polynomial associated to 
\eqref{eq:intro_dos}, subtracting the equalities, dividing the result
by $\lambda_j-\lambda_i$ and using the identities
\begin{eqnarray*}
  \lambda_j^3+\lambda_j^2\lambda_i+\lambda_j\lambda_i^2+\lambda_i^3 &=& (\lambda_j-\lambda_i)^3+4\lambda_i(\lambda_j-\lambda_i)^2+6\lambda_i^2(\lambda_j-\lambda_i)+4\lambda_i^3 \\
  a_3(\lambda_j^2+\lambda_j\lambda_i+\lambda_i^2) &=& a_3(\lambda_j-\lambda_i)^2+3a_3\lambda_i(\lambda_j-\lambda_i)+3a_3\lambda_i^2 \\
  a_2(\lambda_j-\lambda_i) &=&
  a_2(\lambda_j-\lambda_i)+2\lambda_ia_2,
\end{eqnarray*}
we deduce that $\Uppsi^l (\lambda_j-\lambda_i,z)=0$.
Thus, $\lambda_j-\lambda_i$ is a root of the characteristic polynomial associated to 
$\Uppsi^l (\lambda_i,z)=0$ and the proof is concluded.
\end{proof}

We note that the change of variable \eqref{eq:general_change_var} can be applied
by each characteristic root $\lambda_i$ and the equation \eqref{eq:riccati_type}
should be satisfied with $\mu=\lambda_i$. Then, in order to distinguish that $z$
is a solution of \eqref{eq:riccati_type} with $\mu=\lambda_i$ we introduce the notation
$z_i$. Hence, to conclude this step we precise the previous discussion in the
following Lemma. 

\begin{lemma}
\label{lem:cambio_var}
If hypothesis (H$_1$) is satisfied, 
then the fundamental system of solutions of 
\eqref{eq:intro_uno} is given by
\begin{eqnarray}
    y_i(t)=\exp\Big(\int_{t_0}^{t}[\lambda_i+z_i(s)]ds\Big),
    \quad
    \mbox{with $\{\mu_i,z_i\}$ solution of \eqref{eq:riccati_type},}
    \quad i\in\{1,2,3,4\}.
    \label{eq:fundam_sist_sol}
\end{eqnarray}
\end{lemma}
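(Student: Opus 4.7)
My plan is to view \eqref{eq:general_change_var} as setting up, for each choice of root $\mu=\lambda_i$, a correspondence between solutions $z_i$ of the Riccati-type equation \eqref{eq:riccati_type} and functions $y_i$ of the form \eqref{eq:fundam_sist_sol}, and then to exploit the fact that (H$_1$) furnishes four distinct real roots to build four solutions $y_1,\ldots,y_4$ of \eqref{eq:intro_uno}. The direct verification is essentially the derivation carried out just before the statement, run in reverse: differentiating $y_i(t)=\exp(\int_{t_0}^t[\lambda_i+z_i(s)]ds)$ yields $y_i^{(1)}=(\lambda_i+z_i)y_i$ and, by iteration, $y_i^{(k)}=P_k(\lambda_i+z_i,z_i^{(1)},\ldots,z_i^{(k-1)})y_i$ with the polynomials $P_k$ implicit in the passage from \eqref{eq:intro_uno} to \eqref{eq:ricati_original}. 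Substituting into \eqref{eq:intro_uno} and dividing by the strictly positive factor $y_i$ reproduces \eqref{eq:ricati_original} at $\mu=\lambda_i$, i.e.\ equation \eqref{eq:riccati_type}. Hence for each $i\in\{1,2,3,4\}$ every solution $z_i$ of \eqref{eq:riccati_type} with $\mu=\lambda_i$ produces, via \eqref{eq:fundam_sist_sol}, a solution $y_i$ of \eqref{eq:intro_uno}.

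For the fundamental-system assertion I would show linear independence through the Wronskian
\[
W(y_1,y_2,y_3,y_4)(t)=\prod_{i=1}^{4}y_i(t)\cdot\det[M_{k,i}(t)]_{1\le k,i\le 4},
\]
with $M_{k,i}(t)=y_i^{(k-1)}(t)/y_i(t)$ a polynomial in $\lambda_i+z_i(t)$ and in $z_i^{(\ell)}(t)$ for $\ell\le k-2$, whose leading term is $(\lambda_i+z_i(t))^{k-1}$. Abel's identity applied to \eqref{eq:intro_uno} gives $W(t)=W(t_0)\exp(-\int_{t_0}^{t}[a_3+r_3(s)]ds)$, so it is enough to exhibit a single value of $t$ at which $W\ne 0$. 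Since (H$_1$) provides $\lambda_1>\lambda_2>\lambda_3>\lambda_4$ all distinct, whenever the derivatives $z_i^{(\ell)}(t)$ vanish, the matrix $M(t)$ collapses to the Vandermonde matrix $[\lambda_i^{k-1}]$, whose determinant equals $\prod_{i<j}(\lambda_j-\lambda_i)\ne 0$.

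The main obstacle is the nonvanishing of this Wronskian under only (H$_1$). The smoothest route borrows the asymptotic behavior $z_i(t),z_i^{(1)}(t),z_i^{(2)}(t)\to 0$ as $t\to\infty$ supplied by the subsequent main theorems (under the additional hypotheses (H$_2$)--(H$_3$)); then $\det M(t)$ converges to the Vandermonde determinant above, hence is nonzero for all sufficiently large $t$, and by Abel's formula it is nonzero everywhere. A more self-contained alternative is to select the particular solutions of \eqref{eq:riccati_type} determined by the initial data $z_i(t_0)=z_i^{(1)}(t_0)=z_i^{(2)}(t_0)=0$, for which $W(t_0)$ is itself the Vandermonde determinant of $\lambda_1,\ldots,\lambda_4$. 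In either case the existence of the $z_i$'s ultimately relies on the Riccati-solvability results proved in the next section, which is the most delicate link coupling this lemma to the rest of the paper.
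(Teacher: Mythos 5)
Your proposal is correct in substance and is in fact more careful than what the paper offers: the paper gives no explicit proof of Lemma~\ref{lem:cambio_var} at all, presenting it merely as a restatement of the change-of-variable computation \eqref{eq:general_change_var}--\eqref{eq:riccati_type} carried out just before it, and the linear-independence claim implicit in the words ``fundamental system'' is only justified a posteriori in Theorem~\ref{teo1.7.1}, where the Wronskian limit \eqref{eq:asymptotic_perturbed_w} is derived under the additional hypothesis (H$_2$) from $z_i,z_i^{(1)},z_i^{(2)}\to 0$. Your first route (asymptotic collapse of the matrix $[y_i^{(k-1)}/y_i]$ to the Vandermonde matrix, then Abel's identity to propagate nonvanishing backwards) is exactly the paper's implicit strategy, and you are right to flag that it does not run under (H$_1$) alone. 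Your second route --- choosing the Riccati solutions with $z_i(t_0)=z_i^{(1)}(t_0)=z_i^{(2)}(t_0)=0$ so that $W(t_0)$ is literally the Vandermonde determinant $\prod_{i<j}(\lambda_j-\lambda_i)\ne 0$ --- is a genuinely different, self-contained argument that does work under (H$_1$) only, and it buys independence from the fixed-point machinery of Section~3. Two caveats worth recording if you adopt it: (i) the $z_i$ so selected are only guaranteed to exist where the corresponding $y_i$ (which does exist globally, being a solution of the linear equation with data $1,\lambda_i,\lambda_i^2,\lambda_i^3$ at $t_0$) is nonvanishing, since $z_i=y_i^{(1)}/y_i-\lambda_i$; and (ii) these $z_i$ need not coincide with the unique $C_0^2$ solutions produced later by Theorem~\ref{lem:solution_perturbation}, which are the ones the asymptotic formulas of Theorems~\ref{teo:solution_asymptotic} and~\ref{teo1.7.1} actually refer to. So the two routes prove slightly different statements, and the paper's intended reading of the lemma is the one tied to the decaying solutions.
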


\subsection{Step 2: Well posedness and asymptotic behavior 
of the Riccati-type equation~\eqref{eq:ricati_original}.}
In this second step, we obtain three results.
The first result is related to the conditions
for the existence and uniqueness 
of a more general  equation of that given in \eqref{eq:ricati_original},
see Theorem~\ref{teo:solution_riccati_type_general}.
Then, we introduce a second result concerning to the well posedness 
of~\eqref{eq:ricati_original}, see Theorem~\ref{lem:solution_perturbation}. 
Finally, we present the result of asymptotic behavior for
\eqref{eq:ricati_original}, see Theorem~\ref{teo:solution_asymptotic}. 
Indeed, to be precise these three results are the following theorems:
\begin{theorem}
\label{teo:solution_riccati_type_general}
Let us introduce the notation $C_0^2([t_0,\infty[)$
for the following space of functions
\begin{eqnarray*}
C_0^2([t_0,\infty[)
=\Big\{z\in C^2([t_0,\infty[,\R)\quad :\quad
z,z^{(1)},z^{(2)}\to 0\text{ when }t\to\infty\Big\},
\quad t_0\in\R,
\end{eqnarray*}
and consider the equation 
\begin{eqnarray}
z^{(3)}+ \sum_{i=0}^2 b_i z^{(i)}=\Omega(t)+F(t,z,z^{(1)},z^{(2)}),
\quad 
(b_0,b_1,b_2)\in\mathbb{R}^3,
\label{eq:riccati_type_general}
\end{eqnarray}
where $\Omega$ and $F$ are given functions such that the following
restrictions
\begin{enumerate}
  \item[($\mathcal{R}_1$)] There exists the functions 
  $\hat{F}_1,\hat{F}_2,\Gamma:\mathbb{R}^4\to\mathbb{R}$; 
  $\Lambda_1,\Lambda_2:\mathbb{R}\to\mathbb{R}^3$ and $\mathbf{C}\in\R^7$, such that 
\begin{eqnarray*}  
 F  &=&\hat{F}_1+\hat{F}_2+\Gamma,
\\
\hat{F}_1(t,x_1,x_2,x_3)&=&\Lambda_1(t)\cdot (x_1,x_2,x_3),
\\ 
\hat{F}_2(t,x_1,x_2,x_3)&=&\Lambda_2(t)\cdot (x_1x_2,x^2_1,x^3_1),
\\ 
\Gamma(t,x_1,x_2,x_3)&=&\mathbf{C}\cdot 
(x^2_2,x_1x_2,x_1x_3,x^2_1,x^2_1x_2,x^3_1,x^4_1),
\end{eqnarray*}
where ``$\cdot$'' denotes the  canonical inner product in $\R^n$.

\item[($\mathcal{R}_2$)]  The roots $\gamma_i$, $i=1,2,3,$
of the corresponding characteristic
polynomial associated to the homogeneous part of \eqref{eq:riccati_type_general}
are real and simple.

\item[($\mathcal{R}_3$)] It is assumed that
$\mathcal{L}(\Omega)(t)\to 0$, $\mathcal{L}(\|\Lambda_1\|_1)(t)\to 0$
and
$\mathcal{L}\Big(\|\Lambda_2\|_1\Big)(t)$ 
is bounded, when $t\to\infty$. 
Here $\|\cdot\|_1$ denotes the norm of the sum in $\R^n$
and  $\mathcal{L}$ is the operator defined on~\eqref{eq:functyonal_L}.
\end{enumerate}
hold. Then, there exists a unique  $z\in C_0^2([t_0,\infty[)$ solution 
of  \eqref{eq:riccati_type_general}.
\end{theorem}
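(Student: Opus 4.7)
The plan is to reformulate \eqref{eq:riccati_type_general} as a fixed-point equation $z = Tz$ on the Banach space $X = C_0^2([t_0,\infty[)$ equipped with the norm $\|z\|_X = \sup_{t\ge t_0}(|z(t)|+|z^{(1)}(t)|+|z^{(2)}(t)|)$. Here $T$ inverts the constant-coefficient linear operator on the left-hand side of \eqref{eq:riccati_type_general} against $\Omega + F$ via its Green's function $g(t,s)$. By $(\mathcal{R}_2)$ the roots $\gamma_1,\gamma_2,\gamma_3$ are real and simple, so $g$ is a finite sum of exponentials $e^{\gamma_i(t-s)}$ in which, for each mode, the domain of integration is restricted to $[t,\infty[$ when $\gamma_i>0$ and to $[t_0,t]$ when $\gamma_i<0$. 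This selection guarantees that $g,\partial_t g,\partial_{tt}g$ are dominated by a decaying exponential kernel in $|t-s|$, which is precisely the structure that the functional $\mathcal{L}$ in \eqref{eq:functyonal_L} is built to control.

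Inside the closed ball $B_R=\{z\in X:\|z\|_X\le R\}$ the decomposition $(\mathcal{R}_1)$ yields the pointwise bound
\begin{eqnarray*}
|F(s,z,z^{(1)},z^{(2)})|\le R\,\|\Lambda_1(s)\|_1+(R^2+R^3)\|\Lambda_2(s)\|_1+C_0(R^2+R^3+R^4),
\end{eqnarray*}
with $C_0=\|\mathbf{C}\|_1$, and hence
\begin{eqnarray*}
\|Tz\|_X\le \mathcal{L}(\Omega)(t)+R\,\mathcal{L}(\|\Lambda_1\|_1)(t)+(R^2+R^3)\mathcal{L}(\|\Lambda_2\|_1)(t)+C_0(R^2+R^3+R^4)\mathcal{L}(1)(t).
\end{eqnarray*}
Hypothesis $(\mathcal{R}_3)$, together with the boundedness of $\mathcal{L}(1)$ produced by the exponential localization of $g$, lets me choose $R$ small and $t_0$ large so the right-hand side is at most $R$, giving $T(B_R)\subset B_R$. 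A parallel estimate on $T(z_1)-T(z_2)$, using the local Lipschitz constants of the polynomial nonlinearities on $B_R$ (which scale as $O(\|\Lambda_1\|_1)+O(R\|\Lambda_2\|_1)+O(R)+O(R^2)+O(R^3)$ after integration against $g$), yields a strict contraction for $R$ small and $t_0$ large enough; Banach's fixed-point theorem then produces a unique $z\in B_R$ with $z=Tz$, and standard differentiation under the integral sign shows $z$ is a classical solution of \eqref{eq:riccati_type_general}.

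The main technical obstacle is upgrading ``bounded'' to ``belongs to $C_0^2([t_0,\infty[)$'', i.e.\ showing that $z,z^{(1)},z^{(2)}$ actually tend to zero at infinity. For the contributions of $\Omega$ and $\hat F_1$ the decay is immediate from $\mathcal{L}(\Omega)(t)\to 0$ and $\mathcal{L}(\|\Lambda_1\|_1)(t)\to 0$. For $\hat F_2$ and $\Gamma$ the controls $\mathcal{L}(\|\Lambda_2\|_1)$ and $\mathcal{L}(1)$ are only bounded, so the vanishing must come from the integrand factors $z(s)z^{(1)}(s), z(s)^2,\dots$, which already decay at infinity when $z\in C_0^2$. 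A splitting of the integration domain into $[t_0,M]$ and $[M,\infty[$, combined with the exponential decay of $g$ in $|t-s|$, dispatches both the forward (classical Laplace-type) and the backward part of each integral and shows that $T$ preserves $C_0^2\cap B_R$. Since this intersection is closed in $X$, the fixed point lies in it, which completes the proof.
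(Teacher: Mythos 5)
Your proposal is correct and follows essentially the same route as the paper: the equation is inverted through the sign-adapted Green's function of the constant-coefficient operator, recast as $z=Tz$ on a small closed ball of $C_0^2([t_0,\infty[)$ with the norm $\sup_{t\ge t_0}(|z|+|z^{(1)}|+|z^{(2)}|)$, and the restrictions $(\mathcal{R}_1)$--$(\mathcal{R}_3)$ are used exactly as you describe to get invariance of the ball and a contraction, after which Banach's fixed point theorem applies. Your explicit domain-splitting argument for showing that $T$ maps $C_0^2$ into $C_0^2$ (handling the merely bounded quantities $\mathcal{L}(\|\Lambda_2\|_1)$ and $\mathcal{L}(1)$ by using the decay of $z,z^{(1)},z^{(2)}$ themselves) is the same idea the paper invokes, only spelled out more carefully.
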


\begin{theorem}
\label{lem:solution_perturbation}
Let us consider that the hypothesis (H$_1$) and (H$_2$) are satisfied.
Then, for each $i=1,\ldots,4$, the equation~\eqref{eq:riccati_type} has a unique solution
$\{\mu_i,z_i\}$ with
$z_i\in C_0^2([t_0,\infty[)$.
\end{theorem}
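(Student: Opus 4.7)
The plan is to reduce Theorem~\ref{lem:solution_perturbation} to Theorem~\ref{teo:solution_riccati_type_general} by, for each fixed $i\in\{1,\dots,4\}$, rewriting the Riccati-type equation~\eqref{eq:riccati_type} with $\mu=\lambda_i$ in the canonical form~\eqref{eq:riccati_type_general} and then verifying the three restrictions $(\mathcal{R}_1)$--$(\mathcal{R}_3)$.

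First I would move $\Uppsi^l(\lambda_i,z)$ to the left-hand side to read off $b_2=4\lambda_i+a_3$, $b_1=6\lambda_i^2+3a_3\lambda_i+a_2$, $b_0=4\lambda_i^3+3\lambda_i^2 a_3+2\lambda_i a_2+a_1$. Then I would partition the remaining terms (which come from $-\Uppsi^n(\lambda_i,z)$) in the form dictated by $(\mathcal{R}_1)$: the purely time-dependent forcing is
\[
\Omega(t)=-\bigl[\lambda_i^{3}r_3(t)+\lambda_i^{2}r_2(t)+\lambda_i r_1(t)+r_0(t)\bigr];
\]
the linear-in-$z$ part with time-dependent coefficients is captured by
\[
\Lambda_1(t)=-\bigl(3\lambda_i^{2}r_3(t)+2\lambda_i r_2(t)+r_1(t),\;3\lambda_i r_3(t)+r_2(t),\;r_3(t)\bigr);
\]
the nonlinear terms whose coefficients carry a $r_j$ factor provide
\[
\Lambda_2(t)=-\bigl(3r_3(t),\;r_2(t)+3\lambda_i r_3(t),\;r_3(t)\bigr),
\]
matched with the monomials $(x_1x_2,x_1^2,x_1^3)$; the remaining purely polynomial terms (with constant coefficients $4$, $4\lambda_i+3a_3$, $3$, $6\lambda_i^{2}+3\lambda_i a_3+a_2$, $6$, $4\lambda_i$, $1$) produce the constant vector $\mathbf{C}\in\R^{7}$ attached to $(x_2^{2},x_1x_2,x_1x_3,x_1^{2},x_1^{2}x_2,x_1^{3},x_1^{4})$. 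This direct bookkeeping establishes $(\mathcal{R}_1)$.

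To verify $(\mathcal{R}_2)$ I would invoke Proposition~\ref{lem:solucion_partelineal}: for each $j\neq i$, the number $\lambda_j-\lambda_i$ is a root of the characteristic polynomial of $\Uppsi^l(\lambda_i,z)=0$. Under (H$_1$) the $\lambda_j$ are real and pairwise distinct, so the three differences $\{\lambda_j-\lambda_i:j\neq i\}$ are real and distinct, hence they exhaust the three roots of this third-degree polynomial and $(\mathcal{R}_2)$ is fulfilled. For $(\mathcal{R}_3)$, hypothesis (H$_2$) gives $\mathcal{L}(r_j)(t)\to 0$ as $t\to\infty$ for every $j=0,\dots,3$. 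Since $\mathcal{L}$ is sublinear and $\Omega$, $\|\Lambda_1\|_1$, and $\|\Lambda_2\|_1$ are finite linear combinations of $|r_j|$ with constants depending only on $\lambda_i$ and the $a_j$, the triangle inequality under the integral in~\eqref{eq:functyonal_L} gives $\mathcal{L}(\Omega)(t)\to 0$, $\mathcal{L}(\|\Lambda_1\|_1)(t)\to 0$, and $\mathcal{L}(\|\Lambda_2\|_1)(t)\to 0$, so in particular the last expression is bounded.

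Once $(\mathcal{R}_1)$--$(\mathcal{R}_3)$ are checked, Theorem~\ref{teo:solution_riccati_type_general} applies and yields a unique $z_i\in C_0^{2}([t_0,\infty[)$ solving~\eqref{eq:riccati_type} with $\mu=\lambda_i$; taking the four pairs $\{\lambda_i,z_i\}$, $i=1,\dots,4$, completes the statement. The only genuinely delicate point I foresee is the clean rearrangement of $\Uppsi^n(\lambda_i,z)$ into the precise decomposition $\hat F_1+\hat F_2+\Gamma$ required by $(\mathcal{R}_1)$ with $\mathbf{C}$ independent of $t$; everything else is an application of Proposition~\ref{lem:solucion_partelineal}, (H$_1$), and linearity of $\mathcal{L}$ through (H$_2$).
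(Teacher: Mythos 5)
Your proposal is correct and follows essentially the same route as the paper: identify $b_0,b_1,b_2$, $\Omega$, $\Lambda_1$, $\Lambda_2$ and $\mathbf{C}$ so that \eqref{eq:riccati_type} takes the form \eqref{eq:riccati_type_general}, check $(\mathcal{R}_2)$ via Proposition~\ref{lem:solucion_partelineal} together with (H$_1$), note that (H$_2$) and linearity of $\mathcal{L}$ give $(\mathcal{R}_3)$, and invoke Theorem~\ref{teo:solution_riccati_type_general}. The only discrepancy is the $x_1x_2$ entry of $\mathbf{C}$, which should be $-(12\lambda_i+3a_3)$ rather than $-(4\lambda_i+3a_3)$, a bookkeeping slip that does not affect the verification of $(\mathcal{R}_1)$--$(\mathcal{R}_3)$.
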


\begin{theorem}
\label{teo:solution_asymptotic}
Consider that the hypothesis (H$_1$),(H$_2$) and (H$_3$) 
are satisfied and for $i=1,\ldots,4,$  
introduce the notation
\begin{eqnarray*}
A_i&=&\frac{1}{|\delta w_i|}\sum_{j=0}^2\alpha_{j,i}
\quad
\mbox{and}
\\
\upvarsigma_i&=&3|\lambda_i|^2+5|\lambda_i|+3+
        \Big(19+7|\lambda_i|+|12\lambda_i+3a_3|+
        |6\lambda^2_i+3\lambda_ia_3+a_2|\Big)\upeta,
\quad \upeta\in ]0,1/2[,
\end{eqnarray*}
 with
\begin{eqnarray*}
 \delta w_1&=&(\lambda_3-\lambda_2)(\lambda_4-\lambda_3)(\lambda_4-\lambda_2),
 \quad
 \delta w_2=(\lambda_3-\lambda_1)(\lambda_4-\lambda_3)(\lambda_4-\lambda_1),
 \\
 \delta w_3&=&(\lambda_2-\lambda_1)(\lambda_4-\lambda_2)(\lambda_4-\lambda_1),
 \quad
 \delta w_4=(\lambda_2-\lambda_1)(\lambda_3-\lambda_2)(\lambda_3-\lambda_1),
 \\
 \alpha_{j,1}
 &=&
 	|\lambda_4-\lambda_3||\lambda_2-\lambda_1|^j+
	|\lambda_4-\lambda_2||\lambda_3-\lambda_1|^j+
	|\lambda_3-\lambda_2||\lambda_4-\lambda_1|^j
\\
\alpha_{j,2}
 &=&
	|\lambda_3-\lambda_4||\lambda_1-\lambda_2|^j+
	|\lambda_1-\lambda_3||\lambda_4-\lambda_2|^j+
	|\lambda_1-\lambda_4||\lambda_3-\lambda_2|^j
 \\ 
 \alpha_{j,3}
 &=&
 	|\lambda_2-\lambda_1||\lambda_4-\lambda_3|^j+
	|\lambda_2+\lambda_4-2\lambda_3||\lambda_1-\lambda_3|^j+
	|\lambda_1+\lambda_4-2\lambda_3||\lambda_2-\lambda_3|^j
 \\ 
 \alpha_{j,4}
 &=&
	|\lambda_3-\lambda_2||\lambda_1-\lambda_4|^j+
	|\lambda_3-\lambda_1||\lambda_2-\lambda_4|^j+
	|\lambda_2-\lambda_1||\lambda_3-\lambda_4|^j 
\end{eqnarray*}
For each $i=1,\ldots,4,$
if $\rho_i \in ]0,(A_{i}\upvarsigma_{i})^{-1}[$, 
then $\{\mu_i,z_i\}$, the solution of \eqref{eq:riccati_type},
has the following asymptotic behavior 
\begin{eqnarray}
z^{(\ell)}_{i}(t)
=
\left\{
\begin{array}{lll}
\displaystyle
O\Big(\int_{t}^{\infty}e^{-\beta(t-s)}|p(\lambda_1,s)|ds\Big),
&\qquad
& i=1,\quad \beta\in [\lambda_2-\lambda_1,0[,
\\
\displaystyle
O\Big(\int_{t_0}^{\infty}e^{-\beta(t-s)}|p(\lambda_2,s)|ds\Big),
&\qquad
& i=2,\quad \beta\in [\lambda_3-\lambda_2,0[,
\\
\displaystyle
O\Big(\int_{t_0}^{\infty}e^{-\beta(t-s)}|p(\lambda_3,s)|ds\Big),
&\qquad
& i=3,\quad \beta\in [\lambda_4-\lambda_3,0[,
\\
\displaystyle
O\Big(\int_{t_0}^{t}e^{-\beta(t-s)}|p(\lambda_4,s)|ds\Big),
&\qquad
& i=4,\quad \beta\in ]0,\lambda_3-\lambda_4],
\end{array}
\right.
\label{eq:asymptotic_teo:form}
\end{eqnarray}
where $p(\mu,s)=\mu r^3_3(s)+\mu r^2_2(s)+\mu r_1(s)+r_0(s).$
\end{theorem}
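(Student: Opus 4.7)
The plan is to run a quantitative fixed-point argument on the Riccati equation \eqref{eq:riccati_type} with $\mu=\lambda_i$, sharpening the qualitative existence furnished by Theorem~\ref{lem:solution_perturbation} into the precise decay rate \eqref{eq:asymptotic_teo:form}. First, Proposition~\ref{lem:solucion_partelineal} combined with (H$_1$) tells me that the characteristic roots of $\Uppsi^l(\lambda_i,\cdot)$ are exactly $\{\lambda_j-\lambda_i:j\neq i\}\subset\R$, simple and with prescribed signs: three negative for $i=1$, three positive for $i=4$, and mixed for $i=2,3$. I would invert $\Uppsi^l(\lambda_i,\cdot)$ by variation of parameters with the Green kernel $G_i(t,s)$ assembled from the modes $e^{(\lambda_j-\lambda_i)(t-s)}$, integrated forward from $t_0$ when $\lambda_j-\lambda_i>0$ and backward from $+\infty$ when $\lambda_j-\lambda_i<0$; this orientation reproduces precisely the four integration ranges appearing in \eqref{eq:asymptotic_teo:form}. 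Splitting $\Uppsi^n(\lambda_i,z)=p(\lambda_i,t)+Q(t,z,z^{(1)},z^{(2)})$, where $Q$ collects the terms that vanish at $z=z^{(1)}=z^{(2)}=0$, the equation becomes the fixed point
\[
T(z)(t)=-\int G_i(t,s)\bigl[p(\lambda_i,s)+Q(s,z(s),z^{(1)}(s),z^{(2)}(s))\bigr]\,ds.
\]

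The quantitative core of the proof is the pointwise estimate on $G_i$ and its first two $t$-derivatives. A partial-fraction expansion writes $\partial_t^{\ell}G_i(t,s)=\sum_{j\ne i}c_j(\lambda_j-\lambda_i)^{\ell}e^{(\lambda_j-\lambda_i)(t-s)}$ with coefficients $c_j=\pm\prod_{k\ne i,j}(\lambda_j-\lambda_k)^{-1}$ whose common denominator is $\delta w_i$. Taking absolute values, grouping by derivative order, and using $|\lambda_j-\lambda_i|^{\ell}\le\alpha_{\ell,i}/\alpha_{0,i}$ on each mode yields
\[
\sum_{\ell=0}^{2}|\partial_t^{\ell}G_i(t,s)|\;\le\;A_i\,e^{-\beta\,\mathrm{sgn}(t-s)\,(t-s)},\qquad A_i=\frac{1}{|\delta w_i|}\sum_{j=0}^{2}\alpha_{j,i},
\]
for any $\beta$ in the range prescribed in \eqref{eq:asymptotic_teo:form}, i.e.\ bounded in absolute value by the slowest admissible exponential rate on each branch. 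In this way $A_i$ is exactly the geometric constant that will drive the contraction.

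I would then carry out the contraction on the weighted space
\[
X_i=\Bigl\{z\in C_0^2([t_0,\infty[):\;\|z\|_{X_i}:=\max_{0\le\ell\le 2}\sup_{t\ge t_0}\frac{|z^{(\ell)}(t)|}{\varphi_i(t)}<\infty\Bigr\},
\]
where $\varphi_i(t)$ is the right-hand side of \eqref{eq:asymptotic_teo:form} stripped of its $O$-symbol, restricted to the closed ball $B_i=\{\|z\|_{X_i}\le\upeta\}$ with $\upeta\in\,]0,1/2[$. For $z\in B_i$, the $r_j$-linear terms of $Q$, namely $r_3z^{(2)}$, $(3\lambda_i r_3+r_2)z^{(1)}$, $(3\lambda_i^2 r_3+2\lambda_i r_2+r_1)z$, convolved against $|G_i|$ are precisely of the form appearing in the operators $\mathbb{F}_i$, so hypothesis (H$_3$) produces a factor $\rho_i$. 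The quadratic and higher contributions ($zz^{(2)}$, $(z^{(1)})^2$, $z^2z^{(1)}$, $z^3$, $z^4$ and their $r_3$-perturbations) are bounded by $\upeta$-powers of $\|z\|_{X_i}$ times $\varphi_i$, exploiting that $\varphi_i(t)\to 0$ by (H$_2$) and $\upeta<1/2$. Collecting the coefficients of $\|z\|_{X_i}$ in the pointwise bound for $T(z)$ assembles exactly $A_i\rho_i\upvarsigma_i$: the pure polynomial-in-$\lambda_i$ terms contribute $3|\lambda_i|^2+5|\lambda_i|+3$, while the $\upeta$-weighted block $19+7|\lambda_i|+|12\lambda_i+3a_3|+|6\lambda_i^2+3\lambda_ia_3+a_2|$ arises from the mixed products $zz^{(\ell)}$ that carry the coefficients of $\Uppsi^l$. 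The bound $\rho_i<(A_i\upvarsigma_i)^{-1}$ then makes $T$ a strict contraction on $B_i$; by uniqueness in $C_0^2$ (Theorem~\ref{lem:solution_perturbation}) its fixed point must coincide with $z_i$, and $\|z_i\|_{X_i}<\infty$ is exactly \eqref{eq:asymptotic_teo:form}.

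The step I expect to be most delicate is the bookkeeping that forces precisely $\upvarsigma_i$ to emerge. Since $z,z^{(1)},z^{(2)}$ are all controlled by the same weight $\varphi_i(t)$ in $X_i$, each mixed monomial in $Q$ of degree two or three must be majorised by $\upeta^{k-1}\varphi_i(t)$ rather than by $\varphi_i(t)^k$, which is legitimate only because $\varphi_i\le 1$ for $t$ large (thanks to (H$_2$)) and $\upeta<1/2$ squeezes higher powers back into the first power. Matching each of the seven monomials in $\Uppsi^n$ with the corresponding block of $\upvarsigma_i$ is an algebraic exercise; once this grouping is carried out cleanly, the contraction inequality closes with the exact constants stated, and the four asymptotic formulas in \eqref{eq:asymptotic_teo:form} follow simultaneously.
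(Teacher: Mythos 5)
Your argument is in substance the paper's own: the paper also inverts $\Uppsi^l(\lambda_i,\cdot)$ through the Green kernel built from the modes $e^{(\lambda_j-\lambda_i)(t-s)}$ (forward/backward according to the sign of $\lambda_j-\lambda_i$), bounds the kernel and its first two $t$-derivatives by $A_i$ times a single exponential, uses (H$_3$) to convert the $r_j$-coefficient integrals into the factor $\rho_i\upvarsigma_i$, and closes with $\rho_iA_i\upvarsigma_i<1$. The only real difference is packaging: the paper runs Picard iteration $\omega_{n+1}=T\omega_n$, $\omega_0=0$, and propagates the bound $\sum_j|\omega_n^{(j)}|\le\Phi_n\varphi_i$ inductively with $\Phi_n=A_i(1+\Phi_{n-1}\rho_i\upvarsigma_i)$, summing the geometric series to get $\Phi=A_i/(1-\rho_iA_i\upvarsigma_i)$, whereas you contract directly in the weighted norm $\sup_t|z^{(\ell)}(t)|/\varphi_i(t)$. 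These are equivalent, but your version has one loose bolt: to reduce the monomials of degree $\ge 2$ in $\Uppsi^n$ (e.g.\ $zz^{(1)}$, $z^4$) to a \emph{single} power of the weight with the stated constant $\upvarsigma_i$, you need the pointwise bound $|z^{(\ell)}(t)|\le\upeta$ on \emph{all} of $[t_0,\infty[$, not just $|z^{(\ell)}|\le\upeta\varphi_i$ together with ``$\varphi_i\le 1$ for $t$ large'' --- $\varphi_i$ can exceed $1$ on an initial interval, and then the coefficient collected in front of $\|z\|_{X_i}$ would exceed $\upvarsigma_i$ there. The paper gets this for free because its iterates stay in $D_\upeta$ (invariance proved in Theorem~\ref{teo:solution_riccati_type_general}); the fix on your side is simply to work on $B_i\cap D_\upeta$, which is still $T$-invariant and closed, after which your contraction closes with exactly the constants stated.
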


\subsection{Step 3: Existence of a fundamental system of solutions for \eqref{eq:intro_uno}
and its asymptotic behavior.}
Here we translate the results for the behavior of $z$   
(see Theorem~\ref{lem:solution_perturbation})
to the variable $y$ via the relation~\eqref{eq:general_change_var}.

\begin{theorem}
\label{teo1.7.1}
Let us assume that the hypothesis (H$_1$) and (H$_2$) are satisfied,
denote by $W[y_1,\ldots,y_4]$ the Wronskian of $\{y_1,\ldots,y_4\}$, by
\begin{eqnarray*}
\pi_i= \prod_{k\in N_i}(\lambda_k-\lambda_i),
\quad 
N_i=\{1,2,3,4\}-\{i\},
\quad
i=1,\ldots,4,
\end{eqnarray*}
by $p(\mu,s)$ the function defined in Theorem~\ref{teo:solution_asymptotic}
and by $F$ the functions defined in Theorem~\ref{teo:solution_riccati_type_general}
with $\Lambda_1,\Lambda_2$ and $\mathbf{C}$ given in \eqref{eq:notation_ricc}.
Then the equation \eqref{eq:intro_uno} has a fundamental system of 
solutions given by \eqref{eq:fundam_sist_sol}.
 Moreover the following properties about
 the asymptotic behavior
\begin{eqnarray}
\frac{y_i^{(\ell)}(t)}{y_i(t)}&=&\Big(\lambda_i\Big)^{\ell},
\quad\mbox{for $i\in\{1,2,3,4\}$ and $\ell\in\{1,2,3,4\}$,}
\label{eq:asymptotic_perturbed}
\\
W[y_1,\ldots,y_4]&=& (\lambda_4-\lambda_1)(\lambda_3-\lambda_1)
(\lambda_2-\lambda_1)(\lambda_3-\lambda_2)
(\lambda_4-\lambda_2)(\lambda_4-\lambda_3),
\label{eq:asymptotic_perturbed_w}
\end{eqnarray}
are satisfied when $t\to\infty.$
Furthermore, if (H$_3$) is satisfied, then 
\begin{eqnarray}
y_i(t)&=& e^{\lambda_i(t-t_0)}
	\exp\Big(\pi^{-1}_i\int_{t_0}^{t}
	\Big[p(\lambda_i,s)+F(s,z_i(s),z_i^{(1)}(s),z_i^{(2)}(s))\Big]ds\Big),
\label{eq:asymptotic_perturbed_2}
\\
y_i^{(\ell)}(t)&=&
	\Big(\lambda_i^{\ell-1}+o(1)\Big)e^{\lambda_i(t-t_0)}
\nonumber\\
&&	\times
	\exp\Big(\pi^{-1}_i\int_{t_0}^{t}
	\Big[p(\lambda_i,s)+F(s,z_i(s),z_i^{(1)}(s),z_i^{(2)}(s))\Big]ds
	\Big),
	\;\;\ell=\overline{2,4},
\label{eq:asymptotic_perturbed_3}
\end{eqnarray}
holds, when $t\to\infty$ with $z_i$ given asymptotically by \eqref{eq:asymptotic_teo:form}.
\end{theorem}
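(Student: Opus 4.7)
The plan is to assemble $y_i$ from the Riccati-type variables $z_i$ and then transfer asymptotics back through the exponential substitution. Under (H$_1$) and (H$_2$), Theorem~\ref{lem:solution_perturbation} produces, for each $i\in\{1,2,3,4\}$, a unique $z_i\in C_0^2([t_0,\infty[)$ solving $\eqref{eq:riccati_type}$ at $\mu=\lambda_i$. Defining $y_i$ via $\eqref{eq:fundam_sist_sol}$ and invoking Lemma~\ref{lem:cambio_var} gives four solutions of $\eqref{eq:intro_uno}$. The identity $y_i^{(1)}/y_i=\lambda_i+z_i$ can be differentiated recursively: from $(y_i^{(k+1)}/y_i)=(y_i^{(k)}/y_i)^{(1)}+(y_i^{(k)}/y_i)(y_i^{(1)}/y_i)$ together with the decay of $z_i$, $z_i^{(1)}$, $z_i^{(2)}$, an induction on $\ell$ yields the asymptotic $\eqref{eq:asymptotic_perturbed}$.

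\textbf{Wronskian and independence.} I would factor $y_i(t)$ from the $i$-th column of the Wronskian to write
\[
\frac{W[y_1,\ldots,y_4](t)}{y_1(t)\,y_2(t)\,y_3(t)\,y_4(t)}=\det\left(\frac{y_i^{(j-1)}(t)}{y_i(t)}\right)_{i,j=1}^{4}.
\]
By the previous step the right-hand side tends, as $t\to\infty$, to the Vandermonde determinant on $\lambda_1,\ldots,\lambda_4$, which is exactly the product appearing in $\eqref{eq:asymptotic_perturbed_w}$. Because (H$_1$) makes this product nonzero, $\{y_1,\ldots,y_4\}$ is linearly independent and hence a fundamental system for $\eqref{eq:intro_uno}$. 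Abel's identity $W'=-(a_3+r_3)W$ combined with $a_3=-\sum_i\lambda_i$ provides a cross-check of the asymptotic factor.

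\textbf{Explicit formula under (H$_3$).} The main content is to rewrite $\int_{t_0}^t z_i$ explicitly. Matching $\eqref{eq:riccati_type}$ at $\mu=\lambda_i$ with the template $\eqref{eq:riccati_type_general}$ of Theorem~\ref{teo:solution_riccati_type_general}, the $z$-independent piece of $\Uppsi^n(\lambda_i,z_i)$ is $p(\lambda_i,\cdot)$ while the remainder defines $F(\cdot,z_i,z_i^{(1)},z_i^{(2)})$. Proposition~\ref{lem:solucion_partelineal} identifies the characteristic roots of the linear part as $\lambda_j-\lambda_i$ with $j\neq i$, so the zero-order coefficient $b_0$ of $\Uppsi^l(\lambda_i,\cdot)$ equals $\pi_i$ up to sign. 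Solving algebraically for $b_0 z_i$ in the Riccati equation and integrating from $t_0$ to $t$, the antiderivatives of $z_i^{(3)}$, $z_i^{(2)}$, $z_i^{(1)}$ become boundary terms that converge to finite limits (since $z_i,z_i^{(1)},z_i^{(2)}\to 0$) and can be absorbed into the normalization at $t_0$. Hence
\[
\int_{t_0}^{t} z_i(s)\,ds = \pi_i^{-1}\int_{t_0}^{t}\bigl[p(\lambda_i,s)+F(s,z_i(s),z_i^{(1)}(s),z_i^{(2)}(s))\bigr]\,ds + o(1),
\]
which, substituted into $y_i(t)=e^{\lambda_i(t-t_0)}\exp(\int_{t_0}^t z_i(s)\,ds)$, delivers $\eqref{eq:asymptotic_perturbed_2}$. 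Differentiating this expression $\ell$ times and replacing $y_i^{(\ell)}/y_i$ by $\lambda_i^{\ell}+o(1)$ via $\eqref{eq:asymptotic_perturbed}$ then produces $\eqref{eq:asymptotic_perturbed_3}$.

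\textbf{Main obstacle.} The delicate step is the third one: tracking the sign of $b_0$ consistently with the conventions in $\eqref{eq:riccati_type_general}$, and verifying that the boundary and nonlinear remainder terms produced by the integration are genuinely absorbed into $o(1)$ relative to the main integrated contribution. This last point relies essentially on the quantitative decay rates $\eqref{eq:asymptotic_teo:form}$ supplied by Theorem~\ref{teo:solution_asymptotic}; once that bookkeeping is in place, the remainder of the argument reduces to routine differentiation of the explicit exponential representation.
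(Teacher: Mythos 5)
Your proposal is correct in substance and follows the paper's architecture for the first two assertions: the identities you generate by recursive differentiation of $y_i^{(1)}/y_i=\lambda_i+z_i$ are exactly the paper's \eqref{eq:deri_uno}--\eqref{eq:deri_cuatro}, and your factorization $W/(y_1y_2y_3y_4)\to\det(\lambda_i^{j-1})$ is what the paper means by ``algebraic rearrangements'' (indeed your normalized formulation is the only reading under which \eqref{eq:asymptotic_perturbed_w} makes sense, since $W$ itself grows exponentially). Where you genuinely diverge is in the proof of \eqref{eq:asymptotic_perturbed_2}: the paper integrates the Green's-function representation $z_i=Tz_i$ from \eqref{eq:operator_fix_point} over $[t_0,t]$, interchanges the order of integration via the auxiliary identity \eqref{eq:idenaux_pro}, and computes the constant $\frac{1}{\delta\gamma}\sum_j(\cdots)/\gamma_j=(\gamma_1\gamma_2\gamma_3)^{-1}=\pi_i^{-1}$; you instead integrate the differential equation \eqref{eq:riccati_type_general} directly and solve for $b_0\int_{t_0}^t z_i$, which is shorter and makes the boundary contributions transparent ($z_i^{(2)}+b_2z_i^{(1)}+b_1z_i$ evaluated at the endpoints). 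Two caveats on your route. First, the sign of $b_0$ is not merely a bookkeeping nuisance: $b_0=4\lambda_i^3+3a_3\lambda_i^2+2a_2\lambda_i+a_1=P'(\lambda_i)=\prod_{k\neq i}(\lambda_i-\lambda_k)=-\pi_i$ (three factors, hence an odd sign flip), and since $\Omega=-p(\lambda_i,\cdot)$ by \eqref{eq:notation_ricc:2}, your computation literally yields $\pi_i^{-1}\int_{t_0}^{t}[p-F]$ rather than $\pi_i^{-1}\int_{t_0}^{t}[p+F]$; you should state this outcome rather than leave it as ``up to sign.'' Second, the endpoint terms at $t_0$ (and, in the paper's version, the constants $\int_{t_0}^{\infty}e^{-\gamma_j(t_0-s)}(\cdots)\,ds$) are genuine nonzero constants, so they produce a fixed multiplicative factor $e^{C}$ in $y_i$, not an $o(1)$ correction. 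Neither point should count heavily against you, because the paper's own proof has exactly the same two defects — it silently writes $p$ for $\Omega$ and absorbs the $t_0$-constants into $o(1)$ — but a careful write-up of your argument would either renormalize $y_i$ by this constant or note that \eqref{eq:asymptotic_perturbed_2} holds up to a constant factor.
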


\section{Proof of main results}
\label{sec:proofs}

In this section we present the proofs of 
Theorems~\ref{teo:solution_riccati_type_general},
\ref{lem:solution_perturbation},
\ref{teo:solution_asymptotic},
and \ref{teo1.7.1}.

\subsection{Proof of Theorem~\ref{teo:solution_riccati_type_general}}

Before of start the proof, we need define 
some notation about Green functions. First, let us consider the homogeneous equation associated
to \eqref{eq:riccati_type_general}
\begin{eqnarray}
z^{(3)}+ \sum_{i=0}^2 b_i z^{(i)}=0,
\label{eq:riccati_type_equiv_homo}
\end{eqnarray}
and denote by $\gamma_i,\quad i=1,2,3,$ the roots of the corresponding characteristic
polynomial for \eqref{eq:riccati_type_equiv_homo}. Then, the green function for 
\eqref{eq:riccati_type_equiv_homo} is defined by 
\begin{eqnarray}
g(t,s)=\frac{1}{\delta\gamma}
\times
\left\{
\begin{array}{lcl}
 g_1(t,s), 
	& \quad & (\re\gamma_1,\re\gamma_2,\re\gamma_3)\in \mathbb{R}^3_{---},
 \\
 g_2(t,s), 
	& & (\re\gamma_1,\re\gamma_2,\re\gamma_3)\in \mathbb{R}^3_{+--},
 \\
 g_3(t,s), 
	& & (\re\gamma_1,\re\gamma_2,\re\gamma_3)\in \mathbb{R}^3_{++-},
 \\
 g_4(t,s), 
	& & (\re\gamma_1,\re\gamma_2,\re\gamma_3)\in \mathbb{R}^3_{+++}, 
\end{array}
\right.
\label{eq:green_function}
\end{eqnarray} 
where $\delta\gamma=(\gamma_2-\gamma_1)(\gamma_3-\gamma_2)(\gamma_3-\gamma_1) $
and
\begin{eqnarray*}
g_1(t,s)&=&
\left\{
\begin{array}{lcl}
 0, & &t\ge s,
 \\ 
(\gamma_3-\gamma_2)e^{-\gamma_1(t-s)}
      +(\gamma_1-\gamma_3)e^{-\gamma_2(t-s)}
      +(\gamma_2-\gamma_1)e^{-\gamma_3(t-s)},
      & \quad & t\le s,
\end{array}
\right.
\\
g_2(t,s)&=&
\left\{
\begin{array}{lcl}
(\gamma_1-\gamma_2)e^{-\gamma_3(t-s)}
      -(\gamma_1-\gamma_3)e^{-\gamma_2(t-s)},
      & \quad & t\le s,
 \\
(\gamma_2-\gamma_3)e^{-\gamma_1(t-s)},
      & &t\ge s,
\end{array}
\right.
\\
g_3(t,s)&=&
\left\{
\begin{array}{lcl}
(\gamma_2-\gamma_1)e^{-\gamma_3(t-s)},
      & &t\le s,
  \\     
(\gamma_3-\gamma_2)e^{-\gamma_1(t-s)}
      -(\gamma_3-\gamma_1)e^{-\gamma_2(t-s)},
      & \quad & t\ge s,
\end{array}
\right.
\\
g_4(t,s)&=&
\left\{
\begin{array}{lcl}
(\gamma_3-\gamma_2)e^{-\gamma_1(t-s)}
      +(\gamma_1-\gamma_3)e^{-\gamma_2(t-s)}
      +(\gamma_2-\gamma_1)e^{-\gamma_3(t-s)},
      & \quad & t\ge s,
 \\
 0, & &t\le s.
\end{array}
\right.
\end{eqnarray*}
Further details on Green functions may be consultd in~\cite{bellman_book}.

Now, we start the proof be noticing that, 
by the method of variation of parameters,  the hypothesis $(\mathcal{R}_2)$, 
implies that the equation
\eqref{eq:riccati_type_general} is equivalent to the following integral equation
\begin{eqnarray}
z(t)=\int_{t_0}^\infty g(t,s)\Big[\Omega(s)
+F\Big(s,z(s),z^{(1)}(s),z^{(2)}(s)\Big)\Big]ds,
\label{eq:integ_equation}
\end{eqnarray}
where $g$ is the Green function defined on  \eqref{eq:green_function}.
We recall that $C_0^2([t_0,\infty[)$ is a Banach space with the norm 
$\|z\|_0=\sup_{t\geq t_0}\{|z(t)|+|z^{(1)}(t)|+|z^{(2)}(t)|\}.$
Now, we define the operator $T$ from $C_0^2([t_0,\infty[)$ to 
$ C_0^2([t_0,\infty[)$ as follows
\begin{eqnarray}
Tz(t)&=&\int_{t_0}^\infty g(t,s)\Big[\Omega(s)
+F\Big(s,z(s),z^{(1)}(s),z^{(2)}(s)\Big)\Big]ds.
\label{eq:operator_fix_point}
\end{eqnarray}
Then, we note that \eqref{eq:integ_equation} can be rewritten as the operator
equation 
\begin{eqnarray}
Tz=z
\qquad
\mbox{over}
\qquad
D_{\upeta}:=\Big\{z\in C_0^2([t_0,\infty[)\quad:\quad \|z\|_0\le\upeta \Big\},
\label{eq:operator_equation}
\end{eqnarray}
where $\upeta\in\R^+$ will be selected
in order to apply the Banach fixed point theorem.
Indeed, we have that

\vskip 0.5cm
\noindent
{\bf (a)} {\it $T$ is well defined from $C_0^2([t_0,\infty[)$ to $C_0^2([t_0,\infty[)$}.
Let us consider an arbitrary $z\in C_0^2([t_0,\infty[).$ We note that
 \begin{eqnarray*}
  T^{(i)}z(t) &=& 
  \int_{t_0}^{\infty}
	\frac{\partial^i g}{\partial t^i}(t,s)
	\Big[\Omega(s)
	+F\Big(s,z(s),z^{(1)}(s),z^{(2)}(s)\Big)
	\Big]ds,
	\quad i=1,2.
\end{eqnarray*}
Then, by the definition of $g$, we immediately deduce that 
$Tz,T^{(1)}z,T^{(2)}z\in C^2([t_0,\infty[,\R).$
Furthermore,  by the hypothesis $(\mathcal{R}_1)$, we can deduce the following estimate
\begin{eqnarray}
  |T^{(i)}z(t)| &\le & \int_{t_0}^{\infty}
	\left|\frac{\partial^i g}{\partial t^i}(t,s)\right|\Big[
	|\Omega(s)|
	+\Big|\hat{F}_1\Big(s,z(s),z^{(1)}(s),z^{(2)}(s)\Big)\Big| 
	\nonumber\\
	&&
	\qquad
	+\Big|\hat{F}_2\Big(s,z(s),z^{(1)}(s),z^{(2)}(s)\Big)\Big| 
	+\Big|\Gamma\Big(s,z(s),z^{(1)}(s),z^{(2)}(s)\Big)\Big|
	\Big]ds
	\label{eq:bound_for_tz0}
\end{eqnarray}
for each $i=0,1,2$.
Now, by application of the hypothesis $(\mathcal{R}_3)$,
the properties of $\hat{F}_1,\hat{F}_2$ and $\Gamma$
and the fact that  $z\in C^2_0$, we have that
the right hand side of \eqref{eq:bound_for_tz0} tends to $0$
when $t\to\infty$. Then,
$Tz,T^{(1)}z,T^{(2)}z\to 0$ when $t\to\infty$
or equivalently $Tz\in C^2_0$ for all $z\in C^2_0.$

\vskip 0.5cm
\noindent
{\bf (b)} {\it For all $\upeta\in ]0,1[$, the set $D_{\upeta}$ 
is invariante under $T$}. Let us consider $z\in D_{\upeta}$.
From \eqref{eq:bound_for_tz0}, we can deduce the following estimate
\begin{eqnarray}
\|Tz\|_0
&\le& \mathcal{L}(\Omega)(t)
	+\|z\|_0 
	\mathcal{L}\Big(\|\Uplambda_1\|_1\Big)(t)
	+2\Big(\|z\|_0\Big)^2\mathcal{L}\Big(\|\Uplambda_2\|_1\Big)(t)
	+\Big(\|z\|_0\Big)^3\mathcal{L}\Big(\|\Uplambda_2\|_1\Big)(t)
	\nonumber\\
	&&
	\hspace{0.8cm}
	+\Big(\|z\|_0\Big)^2
	\Bigg(\sum_{i=1}^4|c_i|+\Big(|c_5|+|c_6|\Big)\|z\|_0+|c_7|\Big(\|z\|_0\Big)^2\Bigg)
	\mathcal{L}\Big(1\Big)(t)
\nonumber
\\
&\le&  I_1+I_2.
\label{eq:bound_for_tz0_inv}
\end{eqnarray}
where
\begin{eqnarray*}
 I_1&=&\mathcal{L}(\Omega)(t)
\\
I_2&=&\|z\|_0
	\Bigg\{
	\mathcal{L}\Big(\|\Uplambda_1\|_1\Big)(t)
	+\Bigg(
	2\mathcal{L}\Big(\|\Uplambda_2\|_1\Big)(t)
	+\mathcal{L}\Big(\|\mathbf{C}\|_1\Big)(t)\Bigg)\|z\|_0
\nonumber
\\
&&
	\qquad
	+\Bigg(\mathcal{L}\Big(\|\Uplambda_2\|_1\Big)(t)+
	\mathcal{L}\Big(\|\mathbf{C}\|_1\Big)(t)
	\Bigg)\Big(\|z\|_0\Big)^2
	+\mathcal{L}\Big(\|\mathbf{C}\|_1\Big)(t)\Big(\|z\|_0\Big)^3\Bigg\}.
\end{eqnarray*}
Now, by $(\mathcal{R}_3)$ we deduce that 
$I_1\to 0$ when $t\to\infty$. Similarly, by application
of $(\mathcal{R}_3)$ we can prove that the inequality
\begin{eqnarray*}
 I_2&\le&  
	\upeta^2
	\Bigg\{
	\Bigg(
	2\mathcal{L}\Big(\|\Uplambda_2\|_1\Big)(t)
	+\mathcal{L}\Big(\|\mathbf{C}\|_1\Big)(t)\Bigg)
	+\Bigg(\mathcal{L}\Big(\|\Uplambda_2\|_1\Big)(t)+
	\mathcal{L}\Big(\|\mathbf{C}\|_1\Big)(t)
	\Bigg)\upeta
\nonumber
\\
&&
	\qquad
	+\mathcal{L}\Big(\|\mathbf{C}\|_1\Big)(t)\upeta^2\Bigg\}
\nonumber
\\ 
&\le& \upeta,
\end{eqnarray*}
holds
when $t\to\infty$ in a right neighborhood of $\upeta=0$. 
Hence, by \eqref{eq:bound_for_tz0_inv} 
and $(\mathcal{R}_3)$,
we prove that $Tz\in D_{\upeta}$ for all $z\in D_{\upeta}$.

\vskip 0.5cm
\noindent
{\bf (c)} {\it $T$ is a contraction for $\upeta\in ]0,1/2[$}. 
Let $z_1,z_2\in D_{\upeta}$, by the hypothesis $(\mathcal{R}_1)$
and algebraic rearrangements,
we follow that
\begin{eqnarray*}
  \|T z_1-T z_2\|_0
   &\leq& 
   \|z_1-z_2\|_0\sum_{i=0}^2
   \int_{t_0}^{\infty}
   \left|\frac{\partial^i g}{\partial t^i}(t,s)\right|
   \|\Uplambda_1(s)\|_1ds
   \\
   &&
   \quad+
   \|z_1-z_2\|_0\max\Big\{2\upeta,3\upeta^2\Big\}
   \sum_{i=0}^2
   \int_{t_0}^{\infty}
   \left|\frac{\partial^i g}{\partial t^i}(t,s)\right|
   \|\Uplambda_2(s)\|_1ds 
   \\
   & &    
   \quad+
   \|z_1-z_2\|_0\max\Big\{2\upeta,3\upeta^2,4\upeta^3\Big\}
   \sum_{i=0}^2
   \int_{t_0}^{\infty}
   \left|\frac{\partial^i g}{\partial t^i}(t,s)\right|
   \|\mathbf{C}\|_1 ds 
   \\
   &\leq&
   \|z_1-z_2\|_0
   \Bigg\{\mathcal{L}\Big(\|\Uplambda_1\|_1\Big)
   +\max\Big\{2\upeta,3\upeta^2\Big\}
    \mathcal{L}\Big(\|\Uplambda_2\|_1\Big)
    \\
   &&
   \hspace{2cm}
   +\max\Big\{2\upeta,3\upeta^2,4\upeta^3\Big\}
   \mathcal{L}\Big(\|\mathbf{C}\|_1\Big)\Bigg\}.
\end{eqnarray*}
Then, by application of $(\mathcal{R}_3)$, we deduce that $T$ is a contraction,
since, for an arbitrary $\upeta\in ]0,1/2[$, we have that $\max\Big\{ 2\upeta, 3\upeta^2 \Big\}
=\max\Big\{ 2\upeta, 3\upeta^2, 4\upeta^3\Big \}=2\upeta<1$.
Hence, from (a)-(c) and application of Banach fixed point theorem,
we deduce that there exists a unique $z\in D_\upeta\subset
C_0^2([t_0,\infty[)$ solution of \eqref{eq:operator_equation}.

\subsection{Proof of Theorem~\ref{lem:solution_perturbation}}

The proof of the Theorem~\ref{lem:solution_perturbation} follows by application 
Theorem~\ref{teo:solution_riccati_type_general}.
Indeed, in the next lines we verify the hypothesis
($\mathcal{R}_1$)-($\mathcal{R}_3$). First,
the hypothesis ($\mathcal{R}_1$) is satisfied 
since \eqref{eq:riccati_type} can 
be rewritten as \eqref{eq:riccati_type_general}. More precisely,
if $\lambda_i$ denotes an arbitrary
characteristic root of \eqref{eq:intro_dos},
we have that the constant coefficients $b_j, j=0,1,2,$ 
in \eqref{eq:riccati_type_general} are defined by
\begin{subequations}
\label{eq:notation_ricc}
\begin{eqnarray}
\begin{array}{rclcrcl}
  b_0 =4\lambda_i^3+3\lambda_i^2a_3+2\lambda_i a_2+a_1, 
  \quad
  b_1 =6\lambda_i^2+3\lambda_i a_3+a_2,
  \quad
  b_2 = 4\lambda_i+a_3,  
\end{array}
\label{eq:notation_ricc:1}
\end{eqnarray}
the functions $\Omega:\R\to\R$ and $\Lambda_1,\Lambda_2:\R\to\R^3$ 
and the constant $\mathbf{C}\in\R^7$ defining the function $F$ are  given by 
\begin{eqnarray}
\Omega(t) &=& -(\lambda_i^3r_3(t)+\lambda_i^2r_2(t)+\lambda_i r_1(t)+r_0(t)),
\qquad
\Lambda_1(t)=(b(t),f(t),h(t)),
\label{eq:notation_ricc:2}\\
\Lambda_2(t)&=&(p(t),f(t),h(t)),
\qquad
\mathbf{C}=
 -\Big(3,\;12\lambda_i +3a_3,
 \;6\lambda_i ^2+3\lambda_i a_3+a_2,
 \;4,\;6,\;4\lambda_i,1\Big),\quad{}
 \label{eq:notation_ricc:3}
\end{eqnarray}
with
\begin{eqnarray}
b(t) = -(3\lambda_i ^2r_3(t)+2\lambda_i r_2(t)+r_1(t)),
\;
f(t)=-(3\lambda_i r_3(t)+r_2(t)),
\;
p(t)=3h(t)=-3r_3(t).
\label{eq:notation_ricc:4}
\end{eqnarray}
\end{subequations}
In second place, by application of 
Proposition~\ref{lem:solucion_partelineal}, we deduce that 
the hypothesis ($\mathcal{R}_2$)  
is satisfied. Meanwhile, we note that (H$_2$)
implies ($\mathcal{R}_3$). 
Thus, we deduce that conclusion of the Theorem~\ref{lem:solution_perturbation}
is valid.

\subsection{Proof of Theorem~\ref{teo:solution_asymptotic}}
First we present some useful bounds
concerning to the Green functions $g_i$ defined on \eqref{eq:green_function}. 
In the case of $g_1$ and $g_4$, for $i\in\mathbb{N}\cup\{0\}$, 
we have the following bound
\begin{eqnarray}
&&\left|\frac{\partial^i g_\ell}{\partial t^i}(t,s)\right|
\leq
 \Big(|\gamma_3-\gamma_2||\gamma_1|^i
	+|\gamma_1-\gamma_3||\gamma_2|^i
	+|\gamma_2-\gamma_2||\gamma_3|^i\Big)
e^{-\alpha_\ell(t-s)},
\quad \ell=\{1,4\},
\quad{}
\label{eq18}
\end{eqnarray}
with $\alpha_1 =\max\Big\{\gamma_1,\gamma_2,\gamma_3\Big\}$ and
$\alpha_4 =\min\Big\{\gamma_1,\gamma_2,\gamma_3\Big\}$. 
Similarly for $g_2$ and $g_3$, for $i\in\mathbb{N}\cup\{0\}$, we  have that
\begin{eqnarray}
&&\left|\frac{\partial^i g_2}{\partial t^i}(t,s)\right|
\leq
\left\{
\begin{array}{lcl}
\Big(|\gamma_2-\gamma_1||\gamma_3|^i
      +|\gamma_3-\gamma_1||\gamma_2|^i\Big)e^{-\max\{\gamma_2,\gamma_3\}(t-s)},
      & \quad & t\le s,
 \\
|\gamma_3-\gamma_2||\gamma_1|^ie^{-\gamma_1(t-s)},
      & &t\ge s,
\end{array}
\right.
\label{eq18:g2}
\\
&&\left|\frac{\partial^i g_3}{\partial t^i}(t,s)\right|
\leq
\left\{
\begin{array}{lcl}
|\gamma_2-\gamma_1||\gamma_3|^ie^{-\gamma_3(t-s)},
      & &t\le s,
\\
\Big(|\gamma_3-\gamma_2||\gamma_1|^i
      +|\gamma_3-\gamma_1||\gamma_2|^i\Big)e^{-\min\{\gamma_1,\gamma_2\}(t-s)},
      & \quad & t\ge s.
\end{array}
\right.
\label{eq18:g3}
\end{eqnarray}
The proof of the bounds \eqref{eq18}-\eqref{eq18:g3} are straightforward by
application of the algebraic properties of the exponential function.

\vspace{0.5cm}
\noindent{\it Proof of \eqref{eq:asymptotic_teo:form} with $\ell=1$.}
Let us denote by  $T$ the operator defined in \eqref{eq:operator_fix_point}
and by  $z_1$ the solution of  the equation~\eqref{eq:riccati_type}
associated with the characteristic root $\lambda_1$ of \eqref{eq:intro_dos}.
Now,  on $D_\upeta$ with $\upeta\in ]0,1/2[,$ we 
define the sequence $\omega_{n+1}=T\omega_{n}$ with $\omega_0=0$, we have
that $\omega_n\to z_1$ when $n\to\infty$. This fact is a consequence 
of the contraction property of $T$. We note that the hypothesis (H$_1$)
and Proposition~\eqref{lem:solucion_partelineal} implies that
all roots of the corresponding characteristic
polynomial for \eqref{eq:riccati_type_equiv_homo} with $b_i$
defined on \eqref{eq:notation_ricc:1} are negative,
since,
$0>\gamma_1=\lambda_2-\lambda_1>
\gamma_2=\lambda_3-\lambda_1>
\gamma_3=\lambda_4-\lambda_1$. 
Moreover, by \eqref{eq:notation_ricc:2}
we note that the following identity $\Omega(s)=p(\lambda_1,s)$
holds.
Then, the Green
 function $g$ defined on \eqref{eq:green_function} is given by 
$(\delta\gamma)^{-1}g_1$. Naturally the operator $T$ can be rewrited equivalently
as follows
\begin{eqnarray}
Tz(t)&=&\frac{1}{\delta\gamma}\int_{t}^{\infty} g_1(t,s)\Big[p(\lambda_1,s)
+F\Big(s,z(s),z^{(1)}(s),z^{(2)}(s)\Big)\Big]ds,
\quad\mbox{for $t\ge t_0$},
\label{eq:operator_fix_point_g1}
\end{eqnarray}
since $g_1(t,s)=0$ for $s\in [t_0,t]$.
Thus,
the proof of \eqref{eq:asymptotic_teo:form} with $\ell=1$
is reduced to prove that
\begin{eqnarray}
&&\exists \quad \Phi_n\in \R_+ \quad :\quad 
\sum_{j=0}^2|\omega_n^{(j)}(t)|\leq
\Phi_n\int_{t}^{\infty}e^{-\beta(t-\tau)}|p(\lambda_1,\tau)|d\tau,
\quad\text{ $\forall$ }t\geq t_0,
\label{eq17}
\\
&& \mbox{$\exists\quad \Phi\in\R_+\quad\;\, :\quad$  $\Phi_n\to \Phi$ 
when $n\to \infty$}.
\label{eq17:uniformly_bounded}
\end{eqnarray}
Indeed, we prove \eqref{eq17} by induction on $n$
and deduce that \eqref{eq17:uniformly_bounded} is a consequence of the construction
of the sequence $\{\Phi_n\}$. Firstly, we prove \eqref{eq17}. Note that
for $n=1$ the estimate \eqref{eq17} is satisfied 
with $\Phi_1=A_1$.
It can be proved inmediatly by the definition of the operator $T$
given on \eqref{eq:operator_fix_point_g1}, the property  $F(s,0,0,0)=0$,
the estimate \eqref{eq18} and the hypothesis
that $\beta\in [\lambda_2-\lambda_1,0[$, since
\begin{eqnarray*}
  &&\sum_{j=0}^2|\omega^{(j)}_{1}(t)| 
  = \sum_{j=0}^2|T^{(j)}\omega_0(t)|
	=\frac{1}{|\delta\gamma|}
	\sum_{j=0}^2\int_{t}^{\infty}
	\left|\frac{\partial^i g_1}{\partial t^i}(t,s)\right||p(\lambda_1,s)|ds
	\\
	&&\qquad\le
	\frac{1}{|\delta\gamma|}\sum_{j=0}^2
\Big(|\gamma_3-\gamma_2||\gamma_1|^j
	+|\gamma_1-\gamma_3||\gamma_2|^j
	+|\gamma_2-\gamma_1||\gamma_3|^j\Big)
	\int_{t}^{\infty}e^{-(\lambda_2-\lambda_1)(t-\tau)}|p(\lambda_1,\tau)|d\tau
	\\
	&&\qquad
	=
	A_1
	\int_{t}^{\infty}e^{-(\lambda_2-\lambda_1)(t-\tau)}|p(\lambda_1,\tau)|d\tau
	\le
	A_1
	\int_{t}^{\infty}e^{-\beta(t-\tau)}|p(\lambda_1,\tau)|d\tau.
\end{eqnarray*} 
Now, assuming that \eqref{eq17} is valid for  
$n=k$, we prove that \eqref{eq17} is also valid for $n=k+1$.
However, before of prove the estimate \eqref{eq17} for $n=k+1$, we note that 
by the hypothesis (H$_3$) (i.e. the perturbations are belong to $\mathcal{F}_1([t_0,\infty[)$), 
the notation \eqref{eq:notation_ricc} and the fact
that $\max\{\upeta,\upeta^2,\upeta^3\}=\upeta$
for $\upeta\in]0,1/2[$, we deduce the following estimates
\begin{subequations}
\label{eq:prior_estimates}
\begin{eqnarray}
&&
\Big|p(\lambda_1,s)+F\Big(s,\omega_k(s),\omega^{(1)}_k(s),\omega^{(2)}_k(s)\Big)\Big|
	\nonumber\\
&&\qquad \le
	|a(s)|
	+|b(s)||\omega_k(s)|
	+|f(s)||\omega_k^{(1)}(s)|+|h(s)||\omega_k^{(2)}(s)| 
	\nonumber\\
   & & 
	\hspace{1cm}
	+|p(s)||\omega_k^{(1)}(s)||\omega_k(s)|
	+|f(s)||\omega_k(s)|^{2}+|h(s)||\omega_k(s)|^{3} 
	\nonumber\\
   & &
   \hspace{1cm}
	+|C_1||\omega_k^{(1)}(s)|^2
	+|C_2||\omega_k(s)||\omega_k^{(1)}(s)|
	+|C_3||\omega_k|^2
	+|C_4||\omega_k(s)||\omega_k^{(2)}(s)|
	\nonumber\\
	& & 
	\hspace{1cm}
	+|C_5||\omega_k(s)|^2|\omega_k^{(2)}(s)|
	+|C_6||\omega_k(s)|^3
	+|C_7||\omega_k(s)|^4 
	\nonumber\\
   &&\qquad \le 
   |p(\lambda_1,s)|
	+\Bigg[|b(s)|
	+|f(s)|+|h(s)| 
	+|p(s)|\upeta
	+|f(s)|\upeta+|h(s)|\upeta^2
	\nonumber\\
   & &
   \hspace{1cm}
	+\Big(\sum_{i=1}^4|C_i|\Big)\upeta
	+\Big(\sum_{i=5}^6|C_i|\Big)\upeta^2  \Bigg]
	\max\Big\{|\omega_k(s)|,|\omega^{(1)}_k(s)|,|\omega^{(2)}_k(s)|\Big\}
\nonumber\\
   && \qquad \le
   |p(\lambda_1,s)|
	+\Bigg[\|(b,f,h)(s)\|_1 +\Bigg(\|(p,f,h)(s)\|_1
	+\|\mathbf{C}\|_1\Bigg)\upeta \Bigg]
	\Big\|\Big(\omega_k,\omega^{(1)}_k,\omega^{(2)}_k\Big)(s)\Big\|_1,
\nonumber\\
&&
\label{eq:efe_estimate}
\\
&&
\int_{t}^{\infty}e^{-(\lambda_2-\lambda_1)(t-s)}|b(s)|ds
\le (3|\lambda_1|^2+2|\lambda_1|+1)\rho_1,
\label{eq:efe_estimate_b}
\\
&&
\int_{t}^{\infty}e^{-(\lambda_2-\lambda_1)(t-s)}|f(s)|ds
\le (3|\lambda_1|+1)\rho_1,
\label{eq:efe_estimate_f}
\\
&&
\int_{t}^{\infty}e^{-(\lambda_2-\lambda_1)(t-s)}|p(s)|ds
\le 3\rho_1,
\qquad
\int_{t}^{\infty}e^{-(\lambda_2-\lambda_1)(t-s)}|h(s)|ds
\le \rho_1.
\label{eq:efe_estimate_ph}
\end{eqnarray}
\end{subequations}
Using \eqref{eq:operator_fix_point_g1},
the notation \eqref{eq:notation_ricc}, the inductive hypothesis,
the inequality \eqref{eq18} and the estimates \eqref{eq:prior_estimates}
we have that
\begin{eqnarray*}
 && \sum_{j=0}^2|\omega^{(j)}_{k+1}(t)| 
  = \sum_{j=0}^2|T^{(j)}\omega_k(t)|
  \\
	&&
	\hspace{0.5cm}
	=
	\frac{1}{|\delta\gamma|}\sum_{j=0}^2\left|\int_{t}^{\infty}
	\frac{\partial^i g_1}{\partial t^i}(t,s)
	\Big[p(\lambda_1,s)
	 +F\Big(s,\omega_k(s),\omega^{(1)}_k(s),\omega^{(2)}_k(s)\Big)\Big]ds\right|
\\
&&\hspace{0.5cm}
\le\frac{1}{|\delta\gamma|}\sum_{j=0}^2\int_{t}^{\infty}
	\left|\frac{\partial^i g_1}{\partial t^i}(t,s)\right|
	\Big|p(\lambda_1,s)
	 +F\Big(s,\omega_k(s),\omega^{(1)}_k(s),\omega^{(2)}_k(s)\Big)\Big|ds
\\
&&\hspace{0.5cm}
\le
A_1\int_{t}^{\infty}e^{-(\lambda_2-\lambda_1)(t-s)}
\Bigg\{|p(\lambda_1,s)|+\Bigg[\|(b,f,h)(s)\|_1 
\\
&&\hspace{1cm}+\Bigg(\|(p,f,h)(s)\|_1
	+\|\mathbf{C}\|_1\Bigg)\upeta \Bigg]
	\Big\|\Big(\omega_k,\omega^{(1)}_k,\omega^{(2)}_k\Big)(s)\Big\|_1\Bigg\}ds
\\
&&\hspace{0.5cm}
\le
A_1\int_{t}^{\infty}e^{-(\lambda_2-\lambda_1)(t-s)}
\Bigg\{|p(\lambda_1,s)|+\Bigg[\|(b,f,h)(s)\|_1 
\\
&&\hspace{1cm}+\Bigg(\|(p,f,h)(s)\|_1
	+\|\mathbf{C}\|_1\Bigg)\upeta \Bigg]
	\Phi_k\int_{s}^{\infty}e^{-\beta(s-\tau)}|p(\lambda_1,\tau)| d\tau\Bigg\}ds
\\
&&\hspace{0.5cm}
\le
A_1\Bigg\{1+
\int_{t}^{\infty}e^{-(\lambda_2-\lambda_1)(t-s)}
\Bigg[\|(b,f,h)(s)\|_1 +\Bigg(\|(p,f,h)(s)\|_1
	+\|\mathbf{C}\|_1\Bigg)\upeta \Bigg]
	\Phi_k ds\Bigg\}
\\
&&\hspace{1cm}\quad
\times
	\Bigg\{
	\int_{t}^{\infty}e^{-\beta(t-\tau)}|p(\lambda_1,\tau)| d\tau\Bigg\}
\\
&&\hspace{0.5cm}
\le 
   A_1\Big(
	1
	+\Phi_k\;\rho_1\;
	\upvarsigma_1\Big)
	\int_{t}^{\infty} e^{-\beta (t-\tau)}|p(\lambda_1,\tau)|d\tau,
\end{eqnarray*}
Then, by the induction process, \eqref{eq17} is satisfied with 
$\Phi_{n}=   A_1(1+\Phi_{n-1}\;\rho_1\;\upvarsigma_1).$
Now, using recursively the definition of $\Phi_{n-2},\ldots,\Phi_{2}$,
we can rewrite $\Phi_{n}$ as the sum of terms of 
a geometric progression where the common ratio is given
by $\rho_1A_1\upvarsigma_1$. Then, the existence
of $\Phi$ satisfying \eqref{eq17:uniformly_bounded} follows
by the hypothesis that
$\rho_1A_1\upvarsigma_1\in ]0,1[$
for $\upeta\in ]0,1/2[.$
More precisely, we have that         
\begin{eqnarray*}
\lim_{n\to\infty}
\Phi_{n}=A_1\lim_{n\to\infty}\sum_{i=0}^{n-1}
	\Big(\rho_1A_1\upvarsigma_1\Big)^i
        =A_1
        \lim_{n\to\infty}
        \frac{\Big[(\rho_1A_1\upvarsigma_1)^n-1\Big]}
        {\rho_1A_1\upvarsigma_1-1}
        =
        \frac{A_1}{1-\rho_1A_1\upvarsigma_1}
        =\Phi>0.
\end{eqnarray*}
Hence, \eqref{eq17}-\eqref{eq17:uniformly_bounded} are  valid and
the proof of \eqref{eq:asymptotic_teo:form} with $\ell=1$
is concluded by passing to the limit the sequence $\{\Phi_{n}\}$ 
when $n\to\infty$ and
in the topology of $C^2_0([t_0,\infty]).$

\vspace{0.5cm}
\noindent{\it Proof of \eqref{eq:asymptotic_teo:form} with $\ell=2$.}
Let us denote by $z_2$ the solution of the equation~\eqref{eq:riccati_type} 
associated with the characteristic root $\lambda_2$ of \eqref{eq:intro_dos}.
Similarly to the case $\ell=1$ we 
define the sequence $\omega_{n+1}=T\omega_{n}$ with $\omega_0=0$ and,
by the contraction property of $T$, we can deduce that
$\omega_n\to z_2$ when $n\to\infty$. 
In this case, we note that $\Omega(s)=p(\lambda_2,s)$.
Moreover, by Proposition~\eqref{lem:solucion_partelineal} we have that
$\gamma_1=\lambda_1-\lambda_2>0>
\gamma_2=\lambda_3-\lambda_2>
\gamma_3=\lambda_4-\lambda_2$. Then the Green
 function $g$ defined on \eqref{eq:green_function} is given by 
$(\delta\gamma)^{-1}g_2$.
Thereby,  the operator $T$ can be rewrited equivalently
as follows
\begin{eqnarray}
Tz(t)&=&\frac{1}{\delta\gamma}\int_{t_0}^{\infty} g_2(t,s)
\Big[p(\lambda_2,s)
+F\Big(s,z(s),z^{(1)}(s),z^{(2)}(s)\Big)\Big]ds
\nonumber\\
&=&
\frac{1}{\delta\gamma} 
\Bigg\{
\int_{t_0}^{t} 
	(\gamma_2-\gamma_3)e^{-\gamma_1(t-s)}
	\Big[p(\lambda_2,s)
	+F\Big(s,z(s),z^{(1)}(s),z^{(2)}(s)\Big)\Big]ds
\nonumber\\
&&
+\int_{t}^{\infty} 
	\Big[(\gamma_2-\gamma_1)e^{-\gamma_3(t-s)}
	-(\gamma_3-\gamma_1)e^{-\gamma_2(t-s)}\Big]
\nonumber\\
&&\hspace{2cm}\times
	\Big[p(\lambda_2,s)
+F\Big(s,z(s),z^{(1)}(s),z^{(2)}(s)\Big)\Big]ds
\Bigg\}.
\label{eq:operator_fix_point_g2}
\end{eqnarray}
Then, the proof of \eqref{eq:asymptotic_teo:form} with $\ell=2$
is reduced to prove 
\begin{eqnarray}
&&\exists \quad \Phi_n\in \R_+ \quad :\quad 
\sum_{j=0}^2|\omega_n^{(j)}(t)|\leq
\Phi_n\int_{t_0}^{\infty}e^{-\beta(t-\tau)}|p(\lambda_2,\tau)|d\tau,\quad\text{ $\forall$ }t\geq t_0,
\label{eq17_2}
\\
&& \mbox{$\exists\quad \Phi\in\R_+\quad\;\, :\quad$  $\Phi_n\to \Phi$ 
when $n\to \infty$}.
\label{eq17:uniformly_bounded_2}
\end{eqnarray}
In the induction step for $n=1$ the estimate \eqref{eq17_2} is satisfied 
with $\Phi_1=A_2$, 
since by the definition of the operator $T$
given on \eqref{eq:operator_fix_point_g2}, the property  $F(s,0,0,0)=0$,
the estimates of type \eqref{eq18:g2}   and the fact
that $\beta\in[\lambda_3-\lambda_2,0[\subset[\lambda_3-\lambda_2,\lambda_1-\lambda_2]$, 
we deduce the following bound
\begin{eqnarray*}
  \sum_{j=0}^2|\omega^{(j)}_{1}(t)| 
  &=& \sum_{j=0}^2|T^{(j)}\omega_0(t)|
	\\
	&\le&
	\frac{1}{|\delta\gamma|}
	\Bigg(
	\sum_{j=0}^2
	|\gamma_2-\gamma_3||\gamma_1|^j
	\Bigg)
	\int_{t_0}^{t}e^{-(\lambda_1-\lambda_2)(t-s)}|p(\lambda_2,s)|ds
	\\
	&&
	\quad
	+\frac{1}{|\delta\gamma|}
	\Bigg(
	\sum_{j=0}^2|\gamma_2-\gamma_1||\gamma_3|^j+
	|\gamma_3-\gamma_1||\gamma_2|^j
	\Bigg)
	\int_{t}^{\infty}e^{-(\lambda_3-\lambda_2)(t-s)}|p(\lambda_2,s)|ds
	\\
	&\le&
	A_2
	\left\{
	\int_{t_0}^{t}e^{-\beta(t-s)}|p(\lambda_2,\tau)|ds
	+
	\int_{t}^{\infty}e^{-\beta(t-s)}|p(\lambda_2,\tau)|ds
	\right\}
	\\
	&=&
	A_2\int_{t_0}^{\infty}e^{-\beta(t-\tau)}|p(\lambda_2,\tau)|d\tau.
\end{eqnarray*}
Noticing that a similar inequalities to \eqref{eq:prior_estimates},
with $\lambda_2$ instead of $\lambda_1$ and integration on
$[t_0,\infty[$ instead of $[t,\infty[$, we deduce that 
\begin{eqnarray*}
	J_1&:=&\int_{t_0}^{t}e^{-(\lambda_1-\lambda_2)(t-s)}|p(\lambda_2,s)|ds
	+
	\int_{t}^{\infty}e^{-(\lambda_3-\lambda_2)(t-s)}|p(\lambda_2,s)|ds
	\\
	&\le&
	\int_{t_0}^{t}e^{-\beta(t-s)}|p(\lambda_2,\tau)|ds
	+
	\int_{t}^{\infty}e^{-\beta(t-s)}|p(\lambda_2,\tau)|ds
	=
	\int_{t_0}^{\infty}e^{-\beta(t-\tau)}|p(\lambda_2,\tau)|d\tau,
\\
	J_2&:=&
	\int_{t_0}^t
	e^{-(\lambda_1-\lambda_2)(t-s)}
	\Bigg[\|(b,f,h)(s)\|_1 
	+\Bigg(\|(p,f,h)(s)\|_1
	+\|\mathbf{C}\|_1\Bigg)\upeta \Bigg]
\\
	&&
	\hspace{1cm}
	\times
	\Big\|\Big(\omega_k,\omega^{(1)}_k,\omega^{(2)}_k\Big)(s)\Big\|_1\Bigg\}ds
	+\int_{t}^{\infty}
	e^{-(\lambda_1-\lambda_2)(t-s)}
	\Bigg[\|(b,f,h)(s)\|_1
\\
	&&
	\hspace{1cm}	
	+\Bigg(\|(p,f,h)(s)\|_1
	+\|\mathbf{C}\|_1\Bigg)\upeta \Bigg]
	\times
	\Big\|\Big(\omega_k,\omega^{(1)}_k,\omega^{(2)}_k\Big)(s)\Big\|_1\Bigg\}ds
\\
	&\le&
	\int_{t_0}^t
	e^{-\beta(t-s)}
	\Bigg[\|(b,f,h)(s)\|_1 
	+\Bigg(\|(p,f,h)(s)\|_1
	+\|\mathbf{C}\|_1\Bigg)\upeta \Bigg]
	\Big\|\Big(\omega_k,\omega^{(1)}_k,\omega^{(2)}_k\Big)(s)\Big\|_1\Bigg\}ds
\\
	&&
	\;
	+\int_{t}^{\infty}
	e^{-\beta(t-s)}
	\Bigg[\|(b,f,h)(s)\|_1 
	+\Bigg(\|(p,f,h)(s)\|_1
	+\|\mathbf{C}\|_1\Bigg)\upeta \Bigg]
	\Big\|\Big(\omega_k,\omega^{(1)}_k,\omega^{(2)}_k\Big)(s)\Big\|_1\Bigg\}ds
\\
	&=&
	\int_{t_0}^{\infty}
	e^{-\beta(t-s)}
	\Bigg[\|(b,f,h)(s)\|_1 
	+\Bigg(\|(p,f,h)(s)\|_1
	+\|\mathbf{C}\|_1\Bigg)\upeta \Bigg]
	\Big\|\Big(\omega_k,\omega^{(1)}_k,\omega^{(2)}_k\Big)(s)\Big\|_1\Bigg\}ds
\\
	&\le&
	\Phi_k
	\int_{t_0}^{\infty}
	e^{-\beta(t-s)}
	\Bigg[\|(b,f,h)(s)\|_1 
	+\Bigg(\|(p,f,h)(s)\|_1
	+\|\mathbf{C}\|_1\Bigg)\upeta \Bigg]
	\int_{t_0}^{\infty}e^{-\beta(s-\tau)}|p(\lambda_2,\tau)|d\tau
	\Bigg\}ds
\\
	&=&
	\Phi_k
	\int_{t_0}^{\infty}
	\Bigg[\|(b,f,h)(s)\|_1 
	+\Bigg(\|(p,f,h)(s)\|_1
	+\|\mathbf{C}\|_1\Bigg)\upeta \Bigg]
	\int_{t_0}^{\infty}e^{-\beta(t-\tau)}|p(\lambda_2,\tau)|d\tau
	\Bigg\}ds
\\
	&\le&
	\Phi_k\;\rho_2\;\upvarsigma_2
	\int_{t_0}^{\infty}e^{-\beta(t-\tau)}|p(\lambda_2,\tau)|d\tau.
\end{eqnarray*}
Then, the general induction step can be proved as follows
\begin{eqnarray*}
&&  \sum_{j=0}^2|\omega^{(j)}_{k+1}(t)| 
  = \sum_{j=0}^2|T^{(j)}\omega_k(t)| 
	\\
&&\qquad\le
	\frac{1}{|\delta\gamma|}
	\Bigg(
	\sum_{j=0}^2
	|\gamma_2-\gamma_3||\gamma_1|^j
	\Bigg)
	\int_{t_0}^{t}e^{-(\lambda_1-\lambda_2)(t-s)}
	\Big|p(\lambda_2,s)+F\Big(s,z(s),z^{(1)}(s),z^{(2)}(s)\Big)\Big|ds
	\\
&&\qquad\qquad
  	+\frac{1}{|\delta\gamma|}
	\Bigg(
	\sum_{j=0}^2|\gamma_1+\gamma_2||\gamma_3|^j+
	|\gamma_1+\gamma_3||\gamma_2|^j
	\Bigg)
	\\
&&\qquad\qquad\qquad
	\times
	\int_{t}^{\infty}e^{-(\lambda_3-\lambda_2)(t-s)}
	\Big|p(\lambda_2,s)+F\Big(s,z(s),z^{(1)}(s),z^{(2)}(s)\Big)\Big|ds
	\\
&&\qquad\le
	A_2
	\Bigg[
	\int_{t_0}^t
	e^{-(\lambda_1-\lambda_2)(t-s)}
	\Bigg\{|p(\lambda_2,s)|+\Bigg[\|(b,f,h)(s)\|_1 
	+\Bigg(\|(p,f,h)(s)\|_1
	+\|\mathbf{C}\|_1\Bigg)\upeta \Bigg]
\\
&&\qquad\qquad
	\times
	\Big\|\Big(\omega_k,\omega^{(1)}_k,\omega^{(2)}_k\Big)(s)\Big\|_1\Bigg\}ds
	+\int_{t}^{\infty}
	e^{-(\lambda_3-\lambda_2)(t-s)}
	\Bigg\{|p(\lambda_2,s)|+\Bigg[\|(b,f,h)(s)\|_1 
\\
&&\qquad\qquad
	+\Bigg(\|(p,f,h)(s)\|_1
	+\|\mathbf{C}\|_1\Bigg)\upeta \Bigg]
	\Big\|\Big(\omega_k,\omega^{(1)}_k,\omega^{(2)}_k\Big)(s)\Big\|_1\Bigg\}ds
	\Bigg]
\\&&\qquad=
	A_2 \Big[J_1+J_2\Big]
\\
&&\qquad
\le 
   A_2\Big(
	1
	+\Phi_k\;\rho_2\;
	\upvarsigma_2\Big)
	\int_{t_0}^{\infty} e^{-\beta (t-\tau)}|p(\lambda_2,\tau)|d\tau.
\end{eqnarray*} 
Hence the thesis of the inductive steps holds with 
$\Phi_{n}=A_2(1+\Phi_{n-1}\rho_2\;\upvarsigma_2)$. 
Now, proceeding in analogous way to the case  $\ell=1$, 
we find that \eqref{eq17:uniformly_bounded_2} is satisfied
with $\Phi=A_2/(1-\rho_2\;\upvarsigma_2A_2)>0$.
Thus, the sequence $\{\Phi_{n}\}$ is convergent 
and $z_2$ (the limit of $\omega_n$
in the topology of $C^2_0([t_0,\infty])$) satisfies \eqref{eq:asymptotic_teo:form}.

\vspace{0.5cm}
\noindent{\it Proof of \eqref{eq:asymptotic_teo:form} with $\ell=3$ .}
The proof of the case $\ell=3$ is similar to case $\ell=2$.

\vspace{0.5cm}
\noindent{\it Proof of \eqref{eq:asymptotic_teo:form} with $\ell=4$ .}
Similarly to the preceding cases,
let us denote by $z_4$ the solution of the equation~\eqref{eq:riccati_type} 
associated with the characteristic root $\lambda_4$ of \eqref{eq:intro_dos}.
We start be defining the sequence $\omega_{n+1}=T\omega_{n}$ with $\omega_0=0$ and
and note that by the contraction property of $T$, we can deduce that
$\omega_n\to z_4$ when $n\to\infty$. 
Now, in this case, we have that $\Omega(s)=p(\lambda_4,s)$,
$\gamma_1=\lambda_1-\lambda_4>
\gamma_2=\lambda_2-\lambda_4>
\gamma_3=\lambda_3-\lambda_4>0$ (see Proposition~\eqref{lem:solucion_partelineal})
and $g=(\delta\gamma)^{-1}g_4$  (see \eqref{eq:green_function}).
Then, we can deduce that the operator $T$ is given by
\begin{eqnarray}
Tz(t)&=&\frac{1}{\delta\gamma}\int_{t_0}^{\infty} g_4(t,s)
\Big[p(\lambda_2,s)
+F\Big(s,z(s),z^{(1)}(s),z^{(2)}(s)\Big)\Big]ds
\nonumber\\
&=&
\frac{1}{\delta\gamma} 
\Bigg\{
\int_{t_0}^{t} 
	\Big[(\gamma_3-\gamma_2)e^{-\gamma_1(t-s)}
	+(\gamma_1-\gamma_3)e^{-\gamma_2(t-s)}
	+(\gamma_2-\gamma_1)e^{-\gamma_3(t-s)}
	\Big]
\nonumber\\
&&\hspace{1.5cm}\times
	\Big[p(\lambda_2,s)
+F\Big(s,z(s),z^{(1)}(s),z^{(2)}(s)\Big)\Big]ds
\Bigg\},
\quad \mbox{for $t\ge t_0$.}
\label{eq:operator_fix_point_g4}
\end{eqnarray}
Thus, to proof \eqref{eq:asymptotic_teo:form} with $\ell=4$
is enough prove the following facts
\begin{eqnarray}
&&\exists \quad \Phi_n\in \R_+ \quad :\quad 
\sum_{j=0}^2|\omega_n^{(j)}(t)|\leq
\Phi_n\int_{t_0}^{t}e^{-\beta(t-\tau)}|p(\lambda_4,\tau)|d\tau,\quad\text{ $\forall$ }t\geq t_0,
\label{eq17_4}
\\
&& \mbox{$\exists\quad \Phi\in\R_+\quad\;\, :\quad$  $\Phi_n\to \Phi$ 
when $n\to \infty$}.
\label{eq17:uniformly_bounded_4}
\end{eqnarray}
Now, by \eqref{eq:operator_fix_point_g4}, \eqref{eq18} and the
roperty  $F(s,0,0,0)=0$,
we note the induction step for $n=1$ the estimate \eqref{eq17_4} is satisfied 
with $\Phi_1=A_4$ and  $\beta\in]0,\lambda_3-\lambda_4]$. Indeed,
we can deduce the following estimate
\begin{eqnarray*}
  &&\sum_{j=0}^2|\omega^{(j)}_{1}(t)| 
      = \sum_{j=0}^2|T^{(j)}\omega_0(t)|
	\\
	&&
	\qquad
	\le
	\frac{1}{|\delta\gamma|}
	\sum_{j=0}^2
	|\gamma_3-\gamma_2||\gamma_1|^j
	+|\gamma_1-\gamma_3||\gamma_2|^j+
	|\gamma_2-\gamma_1||\gamma_3|^j
	\int_{t_0}^{t}e^{-(\lambda_3-\lambda_4)(t-s)}|p(\lambda_4,s)|ds
	\\
	&&\qquad\le
	A_4\int_{t_0}^{t}e^{-\beta(t-\tau)}|p(\lambda_4,\tau)|d\tau.
\end{eqnarray*}
Then,  we can prove 
that the general induction step holds with 
$\Phi_{n}=A_4(1+\Phi_{n-1}\rho_4\;\upvarsigma_4)$,
since proceeding as in the case $\ell=1$, we can
deduce the following estimate
\begin{eqnarray*}
&&  \sum_{j=0}^2|\omega^{(j)}_{k+1}(t)| 
  = \sum_{j=0}^2|T^{(j)}\omega_k(t)| 
\le 
   A_4\Big(
	1
	+\Phi_k\;\rho_4\;
	\upvarsigma_4\Big)
	\int_{t_0}^{t} e^{-\beta (t-\tau)}|p(\lambda_4,\tau)|d\tau.
\end{eqnarray*} 
Hence \eqref{eq17:uniformly_bounded_4} is satisfied
with $\Phi=A_4/(1-\rho_4\;\upvarsigma_4A_4)>0$.
Thus, the sequence $\{\Phi_{n}\}$ is convergent 
and $z_4$ (the limit of $\omega_n$
in the topology of $C^2_0([t_0,\infty])$) satisfies \eqref{eq:asymptotic_teo:form}.

\subsection{Proof of Theorem~\ref{teo1.7.1}}
By Lemma~\ref{lem:cambio_var}, we have that the fundamental system of solutions for
\eqref{eq:intro_uno} is given by \eqref{eq:fundam_sist_sol}.
Moreover, by \eqref{eq:fundam_sist_sol} we deduce the identities
\begin{eqnarray}
\frac{y_i^{(1)}(t)}{y_i(t)}
	 &=&[\lambda_i+z_i(t)]
\label{eq:deri_uno}\\
\frac{y_i^{(2)}(t)}{y_i(t)} 
	&=& [\lambda_i+z_i(t)]^2+z_i^{(1)}(t), 
\label{eq:deri_dos}\\
\frac{y_i^{(3)}(t)}{y_i(t)} 
	&=&
[\lambda_i+z_i(t)]^3+3[\lambda_i+z_i(t)]z_i^{(1)}(t)+z_i^{(2)}(t),
\label{eq:deri_tres}\\
\frac{y_i^{(4)}(t)}{y_i(t)} 
	&=&
[\lambda_i+z_i(t)]^4
	+6[\lambda_i+z_i(t)]^2z_i^{(1)}(t)
	+3[z^{(1)}_i(t)]^2
\nonumber\\
	&&
	+4[\lambda_i+z_i(t)]z_i^{(2)}(t)
	+z_i^{(2)}(t),
\label{eq:deri_cuatro}
\end{eqnarray}
Now, using the facts that $z_i\in C_0^2([t_0,\infty[)$
and  $\{\mu_i,z_i\}$ is a solution of \eqref{eq:riccati_type}, we deduce
the proof of \eqref{eq:asymptotic_perturbed}.
Now, by the definition of the $W[y_1,\ldots,y_n]$,
some algebraic rearrangements and 
\eqref{eq:asymptotic_perturbed}, we deduce \eqref{eq:asymptotic_perturbed_w}.

The proof of \eqref{eq:asymptotic_perturbed_2} follows by 
the identity
\begin{eqnarray}
\int_{t_0}^t e^{-a\tau}\int_{\tau}^\infty e^{-as} H(s)dsd\tau
&=&-\frac{1}{a}\left[
\int_{t}^\infty e^{-a(t-s)}H(s)ds-\int_{t_0}^\infty e^{-a(t_0-s)}H(s)ds
\right]
\nonumber\\
&&
\qquad
+\frac{1}{a}\int_{t_0}^t H(\tau)d\tau
\label{eq:idenaux_pro}
\end{eqnarray}
and by \eqref{eq:operator_fix_point}-\eqref{eq:operator_equation}. 
Now we develop the proof for $i=1$.
Indeed,
by \eqref{eq:fundam_sist_sol} we have that
\begin{eqnarray}
y_1(t)=\exp\Big(\int_{t_0}^t(\lambda_1+z_1(\tau))d\tau\Big)
=e^{\lambda_1(t-t_0)}\exp\Big(\int_{t_0}^t z_1(\tau)d\tau\Big).
\label{eq:solfun_paso1}
\end{eqnarray}
By \eqref{eq:operator_fix_point}-\eqref{eq:operator_equation}
and \eqref{eq:idenaux_pro}, we have that
\begin{eqnarray*}
\int_{t_0}^t z_1(\tau)d\tau
&=& \frac{1}{\delta\gamma}\int_{t_0}^t \int_{t_0}^\infty g_1(\tau,s)
	\Big(p(\lambda_1,s)+F(s,z_1(s),z_1^{(1)}(s),z_1^{(2)}(s)\Big)d\tau
\\
&=& \frac{1}{\delta\gamma}\int_{t_0}^t \int_{\tau}^\infty 
	\Big[
	(\gamma_3-\gamma_2)e^{-\gamma_1(\tau-s)}
	+(\gamma_1-\gamma_3)e^{-\gamma_2(\tau-s)}
	+(\gamma_2-\gamma_1)e^{-\gamma_3(\tau-s)}
	\Big]
\\
&&\hspace{1.8cm}\times
	\Big(p(\lambda_1,s)+F(s,z_1(s),z_1^{(1)}(s),z_1^{(2)}(s))\Big)d\tau
\\
&=&
\frac{1}{\delta\gamma}
	\left[
	\frac{\gamma_3-\gamma_1}{\gamma_1}
	+\frac{\gamma_1-\gamma_3}{\gamma_2}
	+\frac{\gamma_2-\gamma_1}{\gamma_3} 
	\right]
	\int_{t_0}^t\Big(p(\lambda_1,s)+F(s,z_1(s),z_1^{(1)}(s),z_1^{(2)}(s))\Big)d\tau
\\
&&
+\frac{1}{\delta\gamma}
	\Bigg[\frac{\gamma_3-\gamma_1}{\gamma_1}
	\Bigg\{
	\int_{t}^\infty 
	e^{-\gamma_1(t-s)}
	\Big(p(\lambda_1,s)+F(s,z_1(s),z_1^{(1)}(s),z_1^{(2)}(s))\Big)ds
\\
&&
\hspace{2.5cm}
	-\int_{t_0}^\infty 
	e^{-\gamma_1(t_0-s)}
	\Big(p(\lambda_1,s)+F(s,z_1(s),z_1^{(1)}(s),z_1^{(2)}(s))\Big)ds
	\Bigg\}
	\Bigg]
\\
&&
+\frac{1}{\delta\gamma}
	\Bigg[\frac{\gamma_1-\gamma_3}{\gamma_2}
	\Bigg\{
	\int_{t}^\infty 
	e^{-\gamma_2(t-s)}
	\Big(p(\lambda_1,s)+F(s,z_1(s),z_1^{(1)}(s),z_1^{(2)}(s))\Big)ds
\\
&&
\hspace{2.5cm}
	-\int_{t_0}^\infty 
	e^{-\gamma_2(t_0-s)}
	\Big(p(\lambda_1,s)+F(s,z_1(s),z_1^{(1)}(s),z_1^{(2)}(s))\Big)ds
	\Bigg\}
	\Bigg]
\\
&&
+\frac{1}{\delta\gamma}
	\Bigg[\frac{\gamma_2-\gamma_1}{\gamma_3}
	\Bigg\{
	\int_{t}^\infty 
	e^{-\gamma_3(t-s)}
	\Big(p(\lambda_1,s)+F(s,z_1(s),z_1^{(1)}(s),z_1^{(2)}(s))\Big)ds
\\
&&
\hspace{2.5cm}
	-\int_{t_0}^\infty 
	e^{-\gamma_3(t_0-s)}
	\Big(p(\lambda_1,s)+F(s,z_1(s),z_1^{(1)}(s),z_1^{(2)}(s))\Big)ds
	\Bigg\}
	\Bigg]
\\
&=&
\frac{1}{\gamma_1\gamma_2\gamma_3}
	\int_{t_0}^t\Big(p(\lambda_1,s)+F(s,z_1(s),z_1^{(1)}(s),z_1^{(2)}(s))\Big)d\tau
	+o(1)
\end{eqnarray*}
Then, \eqref{eq:asymptotic_perturbed_2} is valid for $i=1$, since
$\gamma_1\gamma_2\gamma_3=(\lambda_2-\lambda_1)(\lambda_3-\lambda_1)(\lambda_4-\lambda_1)=\pi_1.$
The proof of \eqref{eq:asymptotic_perturbed_2} for $i=2,3,4$ is analogous.
Now the proof of  \eqref{eq:asymptotic_perturbed_3} follows by
\eqref{eq:asymptotic_perturbed_2} and \eqref{eq:deri_uno}-\eqref{eq:deri_cuatro}.

\section*{Acknowledgement}

An{\'\i}bal Coronel and Fernando Huancas
thanks for the  support of Fondecyt project 11060400; and
the research projects 124109 3/R, 104709 01 F/E and 
121909 GI/C at Universidad del B{\'\i}o-B{\'\i}o, Chile.
Manuel Pinto thanks for the  support of Fondecyt project
1120709.

\end{document}